\newtheorem{theorem}{Theorem}[section]
\newtheorem{lemma}[theorem]{Lemma}
\newtheorem{proposition}[theorem]{Proposition}
\newtheorem{corollary}[theorem]{Corollary}
\theoremstyle{definition}
\newtheorem{definition}[theorem]{Definition}
\newtheorem{remark}[theorem]{Remark}
\numberwithin{equation}{section}
\DeclareMathOperator{\sgn}{sign}
\newcommand{\Mod}{\operatorname{\mathsf{Mod}}\nolimits}
\renewcommand{\mod}{\operatorname{\mathsf{mod}}\nolimits}
\newcommand{\End}{\operatorname{End}\nolimits}
\newcommand{\Hom}{\operatorname{Hom}\nolimits}
\newcommand{\Ext}{\operatorname{Ext}\nolimits}
\newcommand{\rad}{\operatorname{rad}\nolimits}
\newcommand{\Fun}{\operatorname{Fun}\nolimits}
\newcommand{\Rep}{\operatorname{\mathsf{Rep}}\nolimits}
\newcommand{\rep}{\operatorname{\mathsf{rep}}\nolimits}
\newcommand{\add}{\operatorname{\mathsf{add}}\nolimits}
\newcommand{\id}{\mathrm{id}}
\def\frS{{\mathfrak S}}
\def\Mk{{\mathsf M}_k}
\def\F{{\mathcal F}}
\def\G{{\mathcal G}}
\def\I{{\mathbbm 1}}
\def\bbN{{\mathbb N}}
\def\P{{\mathsf P}}
\def\op{{\mathrm {op}}}
\newcommand{\HOM}{\operatorname{\mathcal{H}\!\!\:{\it om}}\nolimits}
\def\Filt{{\mathrm{Filt}}}
\begin{document}

\title{The adjoints of the Schur functor}
\author{Rebecca Reischuk}
\address{Rebecca Reischuk, Fakult\"at f\"ur Mathematik, Universit\"at Bielefeld, D-33501 Bielefeld, Germany.}
\email{rreischuk@math.uni-bielefeld.de}

\begin{abstract}
We show that the left and right adjoint of the Schur functor can 
be expressed in terms of the monoidal structure of strict polynomial functors.
Using this result we give a necessary and sufficient condition for when the tensor product of two simple strict
polynomial functors is again simple.
\end{abstract}

\maketitle
\setcounter{tocdepth}{1}
\tableofcontents

\section{Introduction}
In his dissertation, Issai Schur defines an algebra, nowadays known as the Schur algebra,
whose module category is equivalent to polynomial representations
of the general linear group. He then
uses a functor, now called the Schur functor, to relate representations
of the general linear group and representations of the symmetric group. 

For suitable choices of parameters, another category, namely the category of strict polynomial functors,
is equivalent to the category of modules over the Schur algebra.
This category, first defined by Friedlander and Suslin in \cite{FS1997},
inherits a tensor product from the category of divided powers.
A tensor product for the category of representations of the symmetric group is given by its Hopf algebra structure.
It has been shown recently that the Schur functor preserves this monoidal structure (\cite{AR2015}). 

The Schur functor has fully faithful left and right adjoints. These adjoints have been studied in order to relate 
the cohomolgy of general linear and symmetric groups (cf.\ \cite{DEN2004}) and to relate (dual) Specht filtrations of symmetric group
modules to Weyl filtrations of modules over the general linear group (cf.\ \cite{HN2004}).

\medskip

We show that the left resp.\ right adjoint of the Schur functor can be expressed in terms of the internal tensor
product resp.\ internal hom of strict polynomial functors (Theorem~\ref{Th:GotimesF} resp.\ Theorem~\ref{Th:GHomF}).
We make use of these expressions to relate the two adjoints.
In addition, we will see that the adjoints induce equivalences of categories when restricting to the subcategories of injective resp.\ projective
strict polynomial functors.

In the last section we consider the tensor product of two simple strict polynomial functors. Touz\'e
showed in \cite{Tou2015} that in almost all cases such a tensor product is not simple. We use the left adjoint of the Schur functor
to calculate the remaining cases. As a consequence we get  a necessary and sufficient condition in terms
of $\Ext$-vanishing between certain simple functors for when the tensor
product of two  simple strict polynomial functors is simple (Theorem~\ref{Th:simples}).
In the case $n=d=p$ a full characterization is given (Theorem~\ref{n=d=p}).

\subsection*{Acknowledgements} 
I would like to thank Karin Erdmann for valuable comments and discussions about
representations of the symmetric group during a research visit in Oxford. 
In particular the results in the last section were completed with her assistance.
I am very grateful to Greg Stevenson for many fruitful discussions and 
his continuous advice on (monoidal) categories.
I am thankful to Nicholas Kuhn for comments on an earlier version of this paper.

\section{Strict polynomial functors}
In the following we briefly recall the definitions of strict polynomial functors and of the internal tensor product as described in \cite[Section 2]{Kr2013}.
Let $k$ be a commutative ring and denote by $\P_k$ the category of finitely generated projective $k$-modules. 
For $d\in\bbN$ denote by $\frS_d$ the symmetric group permuting $d$ elements and for
$V\in\P_k$ let $\frS_d$ act on the right on  $V^{\otimes d}$  by permuting the factors.

\subsection*{Divided, symmetric and exterior powers}
The submodule $\Gamma^d V$ consisting of the $\frS_d$-invariant part of $V^{\otimes d}$ is called the module of \emph{divided powers}.
The coinvariant part is the module of \emph{symmetric powers}, denoted by $S^d V$. The quotient of $V^{\otimes d}$  by the ideal
generated by $v\otimes v$ are the \emph{exterior powers} $\Lambda^dV$.
Since the $k$-modules $\Gamma^d V$, $S^d V$ and $\Lambda^d V$ are free provided $V$ is free, 
sending a module $V$ to $\Gamma^d V$, $S^d V$ resp.\ $\Lambda^d V$ yields functors $\Gamma^d, S^d, \Lambda^d\colon\P_k\to\P_k$.

\subsection*{The category of degree $d$ divided powers} We define the category $\Gamma^d\P_k$ to be the category with
the same objects as $\P_k$ and where the morphisms between two objects $V$ and $W$ are given by
\[\Hom_{\Gamma^d\P_k}(V,W):=\Gamma^d\Hom(V,W)=(\Hom(V,W)^{\otimes d})^{\frS_d}.\]

\subsection*{The category of strict polynomial functors} Finally we define $\Rep\Gamma^d_k$ to be
the  category of $k$-linear representations of $\Gamma^d\P_k$, i.e.\
\[\Rep\Gamma^d_k=\Fun_k(\Gamma^d\P_k,\Mk),\]
where $\Mk$ denotes the category of all $k$-modules. 
The morphisms between two strict polynomial functors $X,Y$ are denoted by $\Hom_{\Gamma^d_k}(X,Y).$

The full subcategory of \emph{finite representations}, i.e.\ $X\in\Rep\Gamma^d_k$ such that $X(V)\in\P_k$ for all $V\in\Gamma^d\P_k$, is denoted
by $\rep\Gamma^d_k$.

The strict polynomial functor \emph{represented by $V\in\Gamma^d\P_k$} is given by
\[\Gamma^{d,V}:=\Hom_{\Gamma^d\P_k}(V,-).\]
For $X\in\Rep\Gamma^d_k$, the Yoneda isomorphism yields
\begin{align}\label{Yoneda}\Hom_{\Gamma^d_k}(\Gamma^{d,V},X)\cong X(V).\end{align}
We have $\Gamma^{d,k}=\Hom_{\Gamma^d\P_k}(k,-)=\Gamma^d\Hom(k,-)\cong\Gamma^d(-)$ and thus $\Gamma^d\in\Rep\Gamma^d_k$.
It is not hard to see that also $S^d, \Lambda^d\in\Rep\Gamma^d_k$.

\subsection*{External tensor product}
For non-negative integers $d,e$ and $X\in\Rep\Gamma^d_k$ and $Y\in\Rep\Gamma^e_k$ we can form the external tensor product 
\begin{align*}
 X\boxtimes Y\in \Rep\Gamma^{d+e}_k.
\end{align*}
 It is given on objects by $(X\boxtimes Y)(V)=X(V)\otimes Y(V)$ and on morphisms via the map
\[\Gamma^{d+e}\Hom(V,W)\to\Gamma^d\Hom(V,W)\otimes\Gamma^e\Hom(V,W).\]

In particular, for positive integers $n,d$ and a composition $\lambda=(\lambda_1,\lambda_2,\dots,\lambda_n)$ of $d$ in $n$ parts, 
i.e.\ an $n$-tuple of non negative integers such that $\sum_i \lambda_i = d$, we can form representable functors 
${\Gamma^{\lambda_1,k}\in\Rep\Gamma^{\lambda_1}_k},\dots,{\Gamma^{\lambda_n,k}\in\Rep\Gamma^{\lambda_n}_k}$
and  take their tensor product to obtain a functor in $\Rep\Gamma^d_k$
\begin{align*} \Gamma^\lambda\coloneqq\Gamma^{\lambda_1}\boxtimes\cdots\boxtimes\Gamma^{\lambda_n}. \end{align*}
In the same way define
\begin{align*} S^\lambda&\coloneqq S^{\lambda_1}\boxtimes\cdots\boxtimes S^{\lambda_n} \\
\Lambda^\lambda&\coloneqq\Lambda^{\lambda_1}\boxtimes\cdots\boxtimes\Lambda^{\lambda_n}. \end{align*}

\subsection*{Representations of Schur algebras}
For $n,d$ positive integers, the Schur algebra can be defined as 
\[S_k(n,d)=\End_{\frS_d}((k^n)^{\otimes d})=\End_{\Gamma^d_k}(\Gamma^{d,k^n})^{\op}.\]
If $n\geq d$ there is an equivalence of categories (\cite[Theorem 3.2]{FS1997} and \cite[Theorem 2.10]{Kr2013})
\begin{align}\label{eqschuralgebra}
\Rep\Gamma^d_k\xrightarrow{\cong}\Mod\End_{\Gamma^d_k}(\Gamma^{d,k^n})\cong S_k(n,d)\Mod,\end{align}
 given by evaluating at $k^n$, i.e.\ a strict polynomial functor $X$ is mapped to $X(k^n)$.

\subsection*{The internal tensor product of strict polynomial functors}
For $V, W$ in $\P_k$ denote by $V\otimes_k W$ the usual tensor product of $k$-modules. This induces a tensor product on 
$\Gamma^d\P_k$, the category of divided powers.
It coincides on objects with the one for $\P_k$ and on morphisms it is given via the following composite:
\begin{align*}\Gamma^d\Hom(V,V')\times\Gamma^d\Hom(W,W')\rightarrow\Gamma^d(\Hom(V,V')\otimes\Hom(W,W'))\\
\xrightarrow{\sim}\Gamma^d\Hom(V\otimes W,V'\otimes W').
\end{align*}

By Day convolution, this in turn yields an internal tensor product on $\Rep\Gamma^d_k$, such
that the Yoneda functor is closed strong monoidal. It is given for representable functors $\Gamma^{d,V}$ and
$\Gamma^{d,W}$ in $\Rep\Gamma^d_k$ by
\[\Gamma^{d,V}\otimes_{\Gamma^d_k}\Gamma^{d,W}:=\Gamma^{d,V\otimes W}.\]

For arbitrary objects it is given by taking colimits, see \cite[Proposition 2.4]{Kr2013} for more details.

The tensor unit is given by \[\I_{\Gamma^d_k}:=\Gamma^{d,k}\cong\Gamma^{(d)}.\]

In the same way,  $\Rep\Gamma^d_k$ is equipped with an internal hom, defined on representable objects by
\[\HOM_{\Gamma^d_k}(\Gamma^{d,V},\Gamma^{d,W}):=\Gamma^{d,\Hom(V,W)}.\]

The internal hom is indeed an adjoint to the internal tensor product, i.e.\ we have
a natural isomorphism (cf.\ \cite[Proposition 2.4]{Kr2013})
\[\Hom_{\Gamma^d_k}(X\otimes_{\Gamma^d_k}Y,Z)\cong\Hom_{\Gamma^d_k}(X,\HOM_{\Gamma^d_k}(Y,Z)).\]

We will omit the indices and write $-\otimes-$ and $\HOM(-,-)$ whenever it is clear which category is considered.

\subsection*{Dualities}
The category of strict polynomial functors admits two kinds of dual, one corresponding to the transpose duality
for modules over the general linear group
and the other one using the internal hom structure of $\Rep\Gamma^d_k$.
\subsubsection*{The Kuhn dual}
 For $X\in\Rep\Gamma^d_k$ it is defined by $X^\circ(V)\coloneqq X(V^*)^*$ where $(-)^*=\Hom_k(-,k)$ denotes the usual dual in $\P_k$.
Taking the Kuhn dual is a contravariant exact functor, sending projective objects to injective objects and vice versa.
Symmetric powers are duals of divided powers, i.e.\ $(\Gamma^d)^\circ=S^d$ and more generally $(\Gamma^\lambda)^\circ=S^\lambda$.
Exterior powers are self-dual, i.e.\ $(\Lambda^\lambda)^\circ=\Lambda^\lambda$.

\subsubsection*{The monoidal dual}
It is defined for $X\in\Rep\Gamma^d_k$ by $X^\vee\coloneqq\HOM_{\Gamma^d_k}(X,\Gamma^d)$.
 This functor is left exact, but in general not right exact.

\begin{lemma}\cite[Lemma 2.7 and Lemma 2.8]{Kr2013}\label{lem:2.7-2.8}
For all $X,Y\in\Rep\Gamma^d_k$ we have a natural isomorphism
\[\HOM_{\Gamma^d\P_k}(X,Y^\circ)\cong\HOM_{\Gamma^d\P_k}(Y,X^\circ).\]
If $X$ is finitely presented we have  natural isomorphisms
\begin{align*}
X\otimes_{\Gamma^d_k}Y^\circ&\cong\HOM_{\Gamma^d_k}(X,Y)^\circ\\
(X\otimes_{\Gamma^d_k}Y)^\circ&\cong\HOM_{\Gamma^d_k}(X,Y^\circ).
\end{align*}
\end{lemma}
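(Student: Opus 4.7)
My plan is to establish all three isomorphisms by first verifying them on representable functors and then extending either via colimits (for the symmetry) or via a finite presentation argument (for the tensor--hom identities). For the symmetry $\Hom_{\Gamma^d_k}(X, Y^\circ) \cong \Hom_{\Gamma^d_k}(Y, X^\circ)$, set $X = \Gamma^{d,V}$ and $Y = \Gamma^{d,W}$; by Yoneda (\ref{Yoneda}) the two sides become $\Gamma^d\Hom(W, V^*)^*$ and $\Gamma^d\Hom(V, W^*)^*$, both canonically isomorphic to $\Gamma^d((V \otimes W)^*)^* \cong S^d(V \otimes W)$ via the natural identification $\Hom(W, V^*) \cong (V \otimes W)^* \cong \Hom(V, W^*)$, and this is manifestly symmetric in $V$ and $W$. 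Writing arbitrary $X$ and $Y$ as colimits of representables and using that $(-)^\circ$ is exact contravariant while $\Hom$ is left exact extends the isomorphism to the general case.

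For $X \otimes Y^\circ \cong \HOM(X, Y)^\circ$ with $X$ finitely presented, I would first verify it on $X$ representable. The tensor--hom adjunction together with Yoneda gives $\HOM(\Gamma^{d,V}, Y)(U) = Y(V \otimes U)$, so $\HOM(\Gamma^{d,V}, Y)^\circ(U) = Y(V \otimes U^*)^*$. On the other hand, $\Gamma^{d,V} \otimes Y^\circ$ can be computed using that the Yoneda embedding is strong monoidal: when $Y = \Gamma^{d,W}$ is also representable, both sides reduce to the same expression $\Gamma^d\Hom(V \otimes W, (-)^*)^*$, and the case of general $Y$ follows by writing $Y$ as a colimit of representables. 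To promote from representable to finitely presented $X$, choose a presentation $\Gamma^{d, V_1} \to \Gamma^{d, V_0} \to X \to 0$: the functor $X \otimes -$ is right exact because the tensor product is closed, and $\HOM(X, -)^\circ$ is also right exact (as the composition of the left-exact $\HOM(X, -)$ with the exact contravariant $(-)^\circ$), so the verified isomorphism on each $\Gamma^{d, V_i}$ induces an isomorphism on the cokernel $X$.

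The remaining identity $(X \otimes Y)^\circ \cong \HOM(X, Y^\circ)$ I would deduce from the previous one by replacing $Y$ with $Y^\circ$ and applying $(-)^\circ$ to both sides, using that $(-)^{\circ\circ} \cong \Id$ on finite representations. The main obstacle I anticipate lies in carefully tracking naturality through the Kuhn dual: because $(-)^\circ$ is contravariant, one must verify that the canonical isomorphisms assemble into morphisms of bifunctors with the correct variance, and the finite presentation hypothesis on $X$ cannot be dropped---it is needed both for the right-exactness step above (so that $X \otimes -$ commutes with the relevant cokernels) and for the isomorphism $X^{\circ\circ} \cong X$ invoked in the final reduction.
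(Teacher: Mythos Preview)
The paper does not give its own proof of this lemma; it is quoted verbatim from \cite[Lemmas~2.7 and~2.8]{Kr2013}. Your overall strategy---check the isomorphisms on representables and then extend using exactness and a finite presentation of $X$---is exactly the approach taken in \cite{Kr2013}, so at the level of method there is nothing substantive to compare.

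That said, your sketch has a few genuine slips worth fixing. In the second isomorphism you promote from representable to finitely presented $X$ via a presentation $\Gamma^{d,V_1}\to\Gamma^{d,V_0}\to X\to 0$, but then appeal to right exactness of ``$X\otimes -$'' and ``$\HOM(X,-)^\circ$''. The variable running through the presentation is $X$, so what you need is that $-\otimes Y^\circ$ and $\HOM(-,Y)^\circ$ are right exact in the \emph{first} slot; the latter follows because $\HOM(-,Y)$ is left exact contravariant and $(-)^\circ$ is exact contravariant. Your reasoning is recoverable, but as written it names the wrong functor. More seriously, your deduction of the third isomorphism from the second does not go through under the stated hypotheses: replacing $Y$ by $Y^\circ$ and applying $(-)^\circ$ requires $Y^{\circ\circ}\cong Y$ and $\HOM(X,Y^\circ)^{\circ\circ}\cong\HOM(X,Y^\circ)$, i.e.\ finiteness conditions on $Y$, whereas the lemma assumes only that $X$ is finitely presented (your own remark about needing ``$X^{\circ\circ}\cong X$'' names the wrong object). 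The repair is to argue directly: both $X\mapsto(X\otimes Y)^\circ$ and $X\mapsto\HOM(X,Y^\circ)$ are left exact contravariant functors of $X$, so the isomorphism on representables extends to finitely presented $X$ by taking kernels. Finally, note that the first isomorphism is stated for the \emph{internal} hom $\HOM$, while your Yoneda computation gives the ordinary $\Hom_{\Gamma^d_k}$; one then upgrades by evaluating $\HOM(X,Y^\circ)$ at $U$ via $\HOM(X,Y^\circ)(U)\cong\Hom_{\Gamma^d_k}(\Gamma^{d,U}\otimes X,Y^\circ)$ and invoking the ordinary version.
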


We collect some important calculations:

\begin{align}
X\otimes\Gamma^d&\cong X\\
S^d\otimes S^d&\cong S^d \label{S^dotimesS^d} \\
(\Gamma^d)^\vee=\HOM(\Gamma^d,\Gamma^d)&\cong(\Gamma^d\otimes S^d)^\circ\cong\Gamma^d\\
\Gamma^\lambda\otimes S^d&\cong S^\lambda\label{GammaxS}\\
\HOM(S^d,S^\lambda)&\cong S^\lambda\label{S,S}
\end{align}

\section{Representations of the symmetric group and the Schur functor}
Recall that $\frS_d$ is the symmetric group permuting $d$ elements. The representations of $\frS_d$, i.e.\ (left) modules
over its group algebra will be denoted by $k\frS_d\Mod$. Define $k\frS_d\mod$ to be the subcategory of modules
that are finitely generated projective over $k$.

\subsection*{Partitions}
We will denote by $\Lambda(n,d)\coloneqq\{\lambda=(\lambda_1,\dots,\lambda_n)\mid\sum\lambda_i=d\}$ the set of all
compositions of $d$ into $n$ parts. Those compositions that are weakly decreasing, i.e.\
$\lambda_1\geq\lambda_2\geq\dots\geq\lambda_n\geq 0$, are called \emph{partitions} and denoted by $\Lambda^+(n,d)$.
The subset of \emph{$p$-restricted partitions}, i.e.\  $\lambda\in\Lambda^+(n,d)$ with $\lambda_i-\lambda_{i+1}<p$, are
denoted by  $\Lambda^+_p(n,d)$.
A sequence $(i_1\dots i_d)$ \emph{belongs to $\lambda$}, denoted as $(i_1\dots i_d)\in\lambda$, if 
$(i_1\dots i_d)$  has $\lambda_{l}$ entries equal to $l$.

\subsection*{Permutation modules} 
Fix a basis $e_1,\dots,e_n$ of $k^n$ and consider the $d$-fold tensor product $(k^n)^{\otimes d}$. It
becomes a left $k\frS_d$-module by defining the module action via
  \begin{align*}
\sigma(v_1 \otimes\dots\otimes v_d) := v_{\sigma^{-1}(1)}\otimes \dots \otimes v_{\sigma^{-1}(d)}.
  \end{align*}
for  $\sigma \in \frS_d$ and $v_1 \otimes \dots \otimes v_d \in (k^n)^{\otimes d}$. It decomposes into 
a direct sum of \emph{transitive permutation modules}
 \begin{align}\label{dec_perm}
(k^n)^{\otimes d}=\bigoplus_{\lambda \in \Lambda(n,d)} M^{\lambda},
\end{align} where $M^{\lambda}$ is the $k$-span of the set $\{e_{i_1}\otimes\dots\otimes e_{i_d} \ | \ (i_1\dots i_d)\in\lambda \}$.

\subsection*{The internal tensor product of representations of symmetric groups}
The Hopf algebra structure of the group algebra $k\frS_d$ endows $k\frS_d\Mod$ with an internal tensor product, the so-called Kronecker product.
For $N,~{N'}\in k\frS_d\Mod$ it is given by taking the usual tensor product over $k$, denoted by $N\otimes_{k}{N'}$,
together with the following diagonal action of $\sigma\in\frS_d$:
\begin{align*}
 \sigma\cdot(n\otimes n')= \sigma n\otimes\sigma n'.
\end{align*}
The tensor unit is given by $M^{(d)}\cong k$, the trivial $k\frS_d$-module.

From the antipode which is defined by $S(\sigma)=\sigma^{-1}$ for $\sigma\in\frS_d$, we also get an internal hom, 
denoted by $\HOM_k(N,{N'})$. It is given by
taking $k$-linear morphisms $\Hom_k(N,{N'})$ together with
the following action for $\sigma\in\frS_d$,  $f\in\Hom_k(N,{N'})$ and $n\in N$:
\[ \sigma\cdot f(n)=\sigma f(S(\sigma) n)=\sigma f(\sigma^{-1} n).\]

\subsection*{Duality}
By setting ${N'}\coloneqq \I_{k\frS_d}=k$, the trivial $k\frS_d$-module, the internal hom provides a dual $N^*\coloneqq \HOM_k(N,k)$. The action becomes
\[ \sigma\cdot f(n)=\sigma f(\sigma^{-1}n)=f(\sigma^{-1} n).\]

\subsection*{The Schur functor}
The Schur functor was originally defined from representations of the Schur algebra to representation of the symmetric group.
Via the equi\-valence (\ref{eqschuralgebra}) this translates to a functor from the category of strict polynomial functors 
to the representations of the symmetric group.

Let  $\omega=(1,\dots,1)\in\Lambda(d,d)$ be the partition with $d$ entries equal to $1$.
  There is (cf.\ \cite[Section 4]{Kr2014}) an algebra isomorphism
\[\End_{\Gamma^d_k}(\Gamma^\omega)\cong  k\frS_d^\op,\]
which identifies the module categories   $k\frS_d\Mod$ and $\Mod\End_{\Gamma^d_k}(\Gamma^\omega)$. In the following we
will write $\End(\Gamma^\omega)$ instead of $\End_{\Gamma^d_k}(\Gamma^\omega)$.

\begin{definition} The \emph{Schur functor}, denoted by $\F$, is defined as
\[\F:=\Hom_{\Gamma^d_k}(\Gamma^\omega,-)\colon\Rep\Gamma^d_k\to\Mod\End(\Gamma^\omega)\cong k\frS_d\Mod.\]
\end{definition}

\subsection*{An equivalence of categories}
Let $\Gamma=\{\Gamma^\lambda\}_{\lambda\in\Lambda(n,d)}$, $M=\{M^\lambda\}_{\lambda\in\Lambda(n,d)}$ 
and $S=\{S^\lambda\}_{\lambda\in\Lambda(n,d)}$. Denote by $\add\Gamma$  resp.\ $\add S$ the full subcategory of $\Rep\Gamma^d_k$ whose 
objects are direct summands  of finite direct sums of $\Gamma^\lambda$ resp.\ $S^\lambda$. Define $\add M$ similarly as a subcategory of $k\frS_d\Mod$.
In \cite[Lemma 4.3]{AR2015} it is shown that the functor $\F=\Hom_{\Gamma^d_k}(\Gamma^\omega,-)$ 
induces an equivalence of categories between $\add\Gamma$ and $\add M$. By taking duals we get that $\F$ also induces
an equivalence of categories between $\add S$ and $\add M$. 

Note that $\add\Gamma$ is the subcategory of 
consisting of all finitely generated projective objects and $\add S$ is the subcategory of finitely generated injective objects.

\section{The left adjoint of the Schur functor}
In this section we present the connection between the left adjoint of the Schur functor and the monoidal structure
of strict polynomial functors. In addition we show that the left adjoint induces an equivalence between some subcategories of 
$\frS_d\Mod$ resp.\ of $\Rep\Gamma^d_k$.

Let $N\in\Mod\End(\Gamma^\omega)$ and $X\in\Rep\Gamma^d_k$. By the usual tensor-hom adjunction we get the following isomorphism
\begin{align*}\Hom_{\Gamma^d_k}(N\otimes_{\End(\Gamma^\omega)}\Gamma^\omega,X)&\cong\Hom_{\End(\Gamma^\omega)}(N,\Hom_{\Gamma^d_k}(\Gamma^\omega,X))\\
&=\Hom_{\End(\Gamma^\omega)}(N,\F(X)).
\end{align*}
Thus, $\F$ has a left adjoint, namely
\begin{align*}
\G_{\otimes}\colon\Mod\End(\Gamma^\omega)&\to\Rep\Gamma^d_k\\
N&\mapsto N\otimes_{\End(\Gamma^\omega)}\Gamma^\omega,
\intertext{in terms of modules for the symmetric group algebra this reads }
\quad \G_{\otimes}\colon k\frS_d\Mod&\rightarrow \Rep\Gamma^d_k\\
 N&\mapsto (-)^{\otimes d}\otimes_{k\frS_d}N.
\end{align*}
We will denote the unit by $\eta_\otimes\colon\id_{\End(\Gamma^\omega)}\to\F\G_{\otimes}$ and the counit by 
$\varepsilon_\otimes\colon\G_{\otimes}\F\to\id_{\Rep\Gamma^d_k}$ and omit indices where possible. Note that $\G_{\otimes}$ is fully faithful,
hence the unit $\eta_\otimes$ is an isomorphism, i.e.\ $\F\G_{\otimes}(X)\cong X$ for all $X\in\Rep\Gamma^d_k$. 

\medskip

We are now interested in the composition $\G_\otimes\F$.

\medskip

\begin{proposition}\label{Prop:GotimesF}
 There is a natural isomorphism
 \[\G_{\otimes}\F(X)\cong X\]
 for all $X\in\add S$.
\end{proposition}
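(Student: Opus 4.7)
The plan is to verify that the counit of the adjunction, $\varepsilon_\otimes : \G_{\otimes}\F \to \id_{\Rep\Gamma^d_k}$, is an isomorphism when restricted to $\add S$. Because $\varepsilon_\otimes$ is a natural transformation between additive functors, the class of objects on which it is an isomorphism is closed under isomorphisms, finite direct sums, and direct summands (the last using naturality together with the fact that retracts of isomorphisms are isomorphisms). Since $\add S$ is generated from $\{S^\lambda\}_{\lambda\in\Lambda(n,d)}$ under these operations, it suffices to show that $\varepsilon_{S^\lambda}$ is an isomorphism for each $\lambda$.

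The key computation is $\G_\otimes(M^\lambda)\cong S^\lambda$. Writing $M^\lambda\cong k\frS_d\otimes_{k\frS_\lambda}k$ as the module induced from the trivial representation of the Young subgroup $\frS_\lambda=\frS_{\lambda_1}\times\cdots\times\frS_{\lambda_n}$, and using the explicit formula $\G_\otimes(N)(V)=V^{\otimes d}\otimes_{k\frS_d}N$ given in the paper, I obtain
\[
\G_\otimes(M^\lambda)(V)\;=\;V^{\otimes d}\otimes_{k\frS_\lambda}k\;=\;(V^{\otimes d})_{\frS_\lambda}.
\]
Since $\frS_\lambda$ acts block-wise on the $d$ tensor positions, these coinvariants factor as
\[
(V^{\otimes\lambda_1})_{\frS_{\lambda_1}}\otimes\cdots\otimes(V^{\otimes\lambda_n})_{\frS_{\lambda_n}}\;=\;S^{\lambda_1}(V)\otimes\cdots\otimes S^{\lambda_n}(V)\;=\;S^\lambda(V),
\]
using the very definition of $S^m$ as $\frS_m$-coinvariants recalled at the start of Section~2.

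To finish, note that $S^\lambda\cong\G_\otimes(M^\lambda)$ therefore lies in the essential image of $\G_\otimes$. Combined with the fact, noted in the paper, that the unit $\eta_\otimes$ is an isomorphism (as $\G_\otimes$ is fully faithful), the triangle identity $\varepsilon_{\G_\otimes(N)}\circ\G_\otimes(\eta_N)=\id$ forces $\varepsilon_{S^\lambda}$ to be an isomorphism, which is precisely what was needed.

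The only genuine computational content is the identification $\G_\otimes(M^\lambda)\cong S^\lambda$, which reduces to the elementary observation that symmetric powers are tensor-power coinvariants together with the standard induction isomorphism for $\otimes_{k\frS_d}$. Everything else is formal manipulation of the adjunction together with closure under sums and summands, so I do not anticipate any serious obstacle.
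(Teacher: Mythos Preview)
Your argument is correct and gives the statement with the extra information that the natural isomorphism is the counit $\varepsilon_\otimes$ itself. It is, however, a genuinely different route from the paper's.

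The paper never computes $\G_\otimes(M^\lambda)$ explicitly. Instead, for $X\in\add S$ it runs a Yoneda argument: using Lemma~\ref{lem:2.7-2.8} to pass to $\Hom_{\Gamma^d_k}(X,(\Gamma^{d,V})^\circ)$, then invoking the equivalence $\F\colon\add S\xrightarrow{\sim}\add M$ (obtained from \cite[Lemma~4.3]{AR2015} by duality, and using that $(\Gamma^{d,V})^\circ\in\add S$) to replace this by $\Hom_{k\frS_d}(\F(X),\F((\Gamma^{d,V})^\circ))$, and finally the adjunction and Yoneda again to reach $(\G_\otimes\F(X))^\circ(V)$. So the paper's proof rests on the prior equivalence $\add S\simeq\add M$ and on the duality lemma, while your proof replaces both by the single explicit identification $V^{\otimes d}\otimes_{k\frS_d}M^\lambda\cong S^\lambda(V)$ and the triangle identity. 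Your approach is more elementary and self-contained (it does not quote \cite{AR2015}); the paper's approach is more uniform in that it treats all of $\add S$ at once without singling out the generators $S^\lambda$, and its Yoneda pattern is reused verbatim for the dual Proposition~\ref{Prop:GHomF}.
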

\begin{proof}
 Let $V\in\Gamma^d\P_k$ and $X\in\add S$. Using Lemma \ref{lem:2.7-2.8}, the Yoneda isomorphism (\ref{Yoneda}), and the equivalence of $\add S$ and $\add M$ we get
 the following sequence of isomorphisms,
 \begin{align*}(X)^\circ(V)&{\cong}\Hom_{\Gamma^d_k}(\Gamma^{d,V},(X)^\circ)\\
  &{\cong}\Hom_{\Gamma^d_k}(X,(\Gamma^{d,V})^\circ)\\
  &{\cong}\Hom_{k\frS_d}(\F(X),\F((\Gamma^{d,V})^\circ))\\
    &{\cong}\Hom_{\Gamma^d_k}(\G_{\otimes}\F(X),(\Gamma^{d,V})^\circ)\\
    &{\cong}\Hom_{\Gamma^d_k}(\Gamma^{d,V},(\G_{\otimes}\F(X))^\circ)\\
    &{\cong}(\G_{\otimes}\F(X))^\circ(V) 
\end{align*}
and thus $\G_{\otimes}\F(X)\cong X$.
\end{proof}

\begin{corollary}\label{Cor:eqGotimes}
The functor $\G_\otimes$ restricted to $\add M$ is an inverse of $\F|_{\add S}$, i.e.\ we have the following equivalences of categories
 \[\pushQED{\qed}\xymatrix{
 \add S\ar@/_/[r]_{\F}&\add M\ar@/_/[l]_{\G_{\otimes}}
 }\qedhere
\popQED\]
\end{corollary}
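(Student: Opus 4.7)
The plan is to deduce the corollary from three ingredients already at our disposal: (i) the equivalence $\F|_{\add S}\colon \add S\xrightarrow{\sim}\add M$ recorded at the end of Section~3, (ii) Proposition~\ref{Prop:GotimesF}, which supplies a natural isomorphism $\varepsilon_\otimes\colon \G_\otimes\F\xrightarrow{\sim}\id$ on $\add S$, and (iii) the fact, noted just before the proposition, that the unit $\eta_\otimes\colon\id\to\F\G_\otimes$ is an isomorphism everywhere because $\G_\otimes$ is fully faithful.

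First I would check that $\G_\otimes$ sends $\add M$ into $\add S$. This is the only point that is not already literally available. Given $N\in\add M$, the equivalence $\F|_{\add S}$ is essentially surjective, so there exists $X\in\add S$ with $\F(X)\cong N$. Then by Proposition~\ref{Prop:GotimesF},
\[
 \G_\otimes(N)\cong \G_\otimes\F(X)\cong X\in\add S.
\]
Hence $\G_\otimes$ restricts to a functor $\add M\to\add S$.

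Next, I would assemble the equivalence. On $\add S$, the counit $\varepsilon_\otimes$ is a natural isomorphism by Proposition~\ref{Prop:GotimesF}. On $\add M$, the unit $\eta_\otimes\colon N\to \F\G_\otimes(N)$ is an isomorphism because $\G_\otimes$ is fully faithful (equivalently, because $\F\G_\otimes\cong\id$ on all of $\Mod\End(\Gamma^\omega)$). The triangle identities of the adjunction $\G_\otimes\dashv \F$ then force $\F|_{\add S}$ and $\G_\otimes|_{\add M}$ to be mutually quasi-inverse equivalences of categories.

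I do not expect any real obstacle here; the content of the corollary is already concentrated in the proposition. The only step that requires any thought is verifying $\G_\otimes(\add M)\subseteq\add S$, and even this reduces immediately to essential surjectivity of $\F|_{\add S}$ combined with Proposition~\ref{Prop:GotimesF}.
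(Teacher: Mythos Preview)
Your argument is correct. The paper states this corollary without proof (it is marked with a \qed directly in the statement), treating it as an immediate consequence of Proposition~\ref{Prop:GotimesF} together with the already-established equivalence $\F|_{\add S}\colon\add S\xrightarrow{\sim}\add M$ from Section~3; you have simply spelled out the routine verification the paper leaves implicit.
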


If we do not restrict to the subcategory $\add S$, the composition $\G_\otimes\F$ is not isomorphic to the identity. 
Though we have the following result:

\begin{theorem}\label{Th:GotimesF}
There is a natural isomorphism
\[\G_{\otimes}\F(X)\cong S^d\otimes_{\Gamma^d_k} X.\]
\end{theorem}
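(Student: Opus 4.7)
My plan is to construct a natural transformation $\tau_X\colon \G_\otimes\F(X)\to S^d\otimes_{\Gamma^d_k} X$ from the monoidal structure on $\F$, verify it is an isomorphism on the projective generators $\Gamma^\mu$ by passing through Corollary~\ref{Cor:eqGotimes}, and then extend to arbitrary $X$ by right-exactness. A preliminary observation is that both functors are right exact and colimit-preserving: $\F=\Hom_{\Gamma^d_k}(\Gamma^\omega,-)$ is exact and continuous because $\Gamma^\omega$ is finitely generated projective (a direct summand of $\Gamma^{d,k^d}$); $\G_\otimes$ is a left adjoint; and $S^d\otimes_{\Gamma^d_k}-$ is the left adjoint of $\HOM(S^d,-)$ by \cite{Kr2013}.

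To build the natural transformation, I would use that $\F$ is strong monoidal (\cite{AR2015}) and that the unit $\eta_\otimes\colon\id\to\F\G_\otimes$ is an isomorphism. This produces a chain of natural isomorphisms
$$\F(S^d\otimes_{\Gamma^d_k} X)\;=\;\F(\G_\otimes(k)\otimes X)\;\cong\;\F\G_\otimes(k)\otimes\F(X)\;\cong\;k\otimes\F(X)\;\cong\;\F(X).$$
Denoting its inverse by $\varphi_X\colon\F(X)\xrightarrow{\sim}\F(S^d\otimes_{\Gamma^d_k} X)$, the adjunction $\G_\otimes\dashv\F$ produces a natural morphism $\tau_X\colon\G_\otimes\F(X)\to S^d\otimes_{\Gamma^d_k} X$ adjoint to $\varphi_X$, characterised by $\varphi_X=\F(\tau_X)\circ(\eta_\otimes)_{\F(X)}$.

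To see that $\tau_{\Gamma^\mu}$ is an isomorphism, I would invoke the equivalence $\F|_{\add\Gamma}\colon\add\Gamma\xrightarrow{\sim}\add M$ of \cite{AR2015} to identify $\F(\Gamma^\mu)\cong M^\mu$; then
$$\G_\otimes\F(\Gamma^\mu)(V)\;=\;V^{\otimes d}\otimes_{k\frS_d}M^\mu\;\cong\;V^{\otimes d}\otimes_{k\frS_\mu}k\;\cong\;S^\mu(V),$$
while~(\ref{GammaxS}) gives $S^d\otimes_{\Gamma^d_k}\Gamma^\mu\cong S^\mu$, so both the source and target of $\tau_{\Gamma^\mu}$ lie in $\add S$. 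Since $\F(\tau_{\Gamma^\mu})=\varphi_{\Gamma^\mu}\circ(\eta_\otimes)_{\F(\Gamma^\mu)}^{-1}$ is a composite of isomorphisms and $\F|_{\add S}$ is an equivalence by Corollary~\ref{Cor:eqGotimes}, $\tau_{\Gamma^\mu}$ is itself an iso. Finally, for an arbitrary $X\in\Rep\Gamma^d_k$ I would pick a projective presentation $\bigoplus_i\Gamma^{\mu_i}\to\bigoplus_j\Gamma^{\nu_j}\to X\to 0$; naturality of $\tau$ together with the right-exactness of both functors delivers the conclusion via the five lemma.

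The main obstacle is the coherence bookkeeping in the third step: one must check that the abstractly defined $\tau$, built out of the strong monoidal constraint on $\F$, the unit $\eta_\otimes$, and the counit of $\G_\otimes\dashv\F$, really induces on $\Gamma^\mu$ the concrete isomorphism $S^\mu\xrightarrow{\sim}S^\mu$ produced by the two independent computations. Equivalently, one must verify a compatibility between the monoidal structure of $\F$, the identification $\F(\Gamma^\mu)\cong M^\mu$, and the identification $\G_\otimes(M^\mu)\cong S^\mu$. Everything else is formal.
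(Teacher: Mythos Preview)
Your proof is correct and follows the same route as the paper: construct the comparison map from the strong monoidal constraint of $\F$ and the adjunction $\G_\otimes\dashv\F$, reduce to projectives by right-exactness of both sides, and invoke Corollary~\ref{Cor:eqGotimes} on $\Gamma^\lambda$ (the paper checks that the counit $\varepsilon_\otimes$ is an isomorphism at $\Gamma^\lambda\otimes S^d\cong S^\lambda\in\add S$; you equivalently use that $\F|_{\add S}$ reflects isomorphisms). Your stated ``main obstacle'' is not one: you never need to match $\tau_{\Gamma^\mu}$ against any concrete isomorphism $S^\mu\to S^\mu$, because your own argument---source and target in $\add S$, $\F(\tau_{\Gamma^\mu})=\varphi_{\Gamma^\mu}\circ(\eta_\otimes)^{-1}$ an isomorphism, $\F|_{\add S}$ an equivalence---already forces $\tau_{\Gamma^\mu}$ to be an isomorphism, so the explicit computation of $\G_\otimes\F(\Gamma^\mu)(V)$ and the coherence check are both superfluous.
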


\begin{proof} 
By \cite[Theorem 4.4]{AR2015} the functor $\F$ is monoidal and thus there is a natural isomorphism \[\Phi_{X,Y}\colon\F(X)\otimes_k\F(Y)\to\F(X\otimes_{\Gamma^d_k}Y).\]
Using this isomorphism and by adjunction we get a sequence of isomorphisms
 \begin{align*}\Hom_{k\frS_d}(\F(X)\otimes_k N,\F(X)\otimes_k N)&\cong\Hom_{k\frS_d}(\F(X)\otimes_kN,\F(X)\otimes_k\F\G_{\otimes}(N))\\
  &\cong\Hom_{k\frS_d}(\F(X)\otimes_kN,\F(X\otimes_{\Gamma^d_k}\G_{\otimes}(N)))\\
  &\cong\Hom_{\Gamma^d_k}(\G_{\otimes}(\F(X)\otimes_kN),X\otimes_{\Gamma^d_k}\G_{\otimes}(N)).
 \end{align*}
Thus, the identity on $\F(X)\otimes_k N$ yields a map $\vartheta_{X,N}$ which is given by 
\[\vartheta_{X,N}\coloneqq(\varepsilon_{\otimes})\circ\G_{\otimes}(\Phi\circ(\id\otimes_k \eta_{\otimes}))
\colon\G_{\otimes}(\F(X)\otimes_kN)\to X\otimes_{\Gamma^d_k}\G_{\otimes}(N).\]
 By  setting $N\coloneqq\I$, the trivial module, 
we get a map
 \[\vartheta_{X,\I}\colon\G_{\otimes}\F(X)\to X\otimes_{\Gamma^d_k}\G_{\otimes}(\I).\]
 We will show that it is an isomorphism.
Since $\G_{\otimes}\F(-)$ and $-\otimes_{\Gamma^d_k}\G_{\otimes}(\I)$ are right exact functors it is enough to show that $\vartheta_{X,\I}$ is
an isomorphism for $X$ projective. 
Thus, let $X=\Gamma^\lambda$. Since $\F(S^d)\cong \I$,  we know by  Proposition \ref{Prop:GotimesF} that $\G_{\otimes}(\I)\cong S^d$. It follows that
$\Gamma^\lambda\otimes_{\Gamma^d_k}\G_{\otimes}(\I)\cong S^\lambda$ by (\ref{GammaxS}) and that
\[(\varepsilon_{\otimes})_{\Gamma^\lambda\otimes_{\Gamma^d_k}\G_{\otimes}(\I)}\colon\G_{\otimes}\F(\Gamma^\lambda\otimes_{\Gamma^d_k}\G_{\otimes}(\I))
\to \Gamma^\lambda\otimes_{\Gamma^d_k}\G_{\otimes}(\I)\]
is an isomorphism by Corollary \ref{Cor:eqGotimes}. Both maps $\Phi$ and $\eta_{\otimes}$ are isomorphisms and thus 
$\G_{\otimes}(\Phi\circ(\id\otimes_k \eta_{\otimes}))$ is an isomorphism. It follows that 
$\vartheta_{\Gamma^\lambda,\I}\colon\G_{\otimes}\F(\Gamma^\lambda)\to\Gamma^\lambda\otimes_{\Gamma^d_k}\G_{\otimes}(\I)$ is an isomorphism.
Identifying $\G_{\otimes}(\I)$ with $S^d$ we get the desired isomorphism
\[\vartheta_{X,\I}\colon\G_{\otimes}\F(X)\xrightarrow{\cong} X\otimes_{\Gamma^d_k}S^d\cong  S^d\otimes_{\Gamma^d_k} X.\qedhere\]
\end{proof}

Recall from (\ref{S^dotimesS^d}) that $S^d\cong S^d\otimes S^d$. Thus, by using the fact that $\F$ preserves the monoidal structure, we get the following
\begin{corollary}The functor $G_{\otimes}$ is compatible with the tensor product, i.e.\ 
  \[\pushQED{\qed}\G_{\otimes}(N\otimes_k {N'})\cong\G_{\otimes}(N)\otimes_{\Gamma^d_k}\G_{\otimes}({N'}).\qedhere
\popQED\]
\end{corollary}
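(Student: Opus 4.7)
The plan is to chase the identifications using the two facts highlighted by the author: first that the unit $\eta_\otimes$ of the adjunction $(\G_\otimes,\F)$ is an isomorphism (so $\F\G_\otimes \cong \id$), and second that $\F$ is monoidal, together with $S^d\otimes S^d\cong S^d$ and Theorem~\ref{Th:GotimesF} in the form $\G_\otimes\F(X)\cong S^d\otimes X$.

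First I would rewrite the left-hand side using $N\cong\F\G_\otimes(N)$ and $N'\cong\F\G_\otimes(N')$:
\[
\G_\otimes(N\otimes_k N') \;\cong\; \G_\otimes\bigl(\F\G_\otimes(N)\otimes_k \F\G_\otimes(N')\bigr).
\]
Now apply the monoidality isomorphism $\Phi$ of $\F$ from \cite[Theorem 4.4]{AR2015} to pull the $\otimes_k$ inside $\F$, obtaining
\[
\G_\otimes(N\otimes_k N') \;\cong\; \G_\otimes\F\bigl(\G_\otimes(N)\otimes_{\Gamma^d_k}\G_\otimes(N')\bigr).
\]
Then Theorem~\ref{Th:GotimesF} converts this into
\[
S^d\otimes_{\Gamma^d_k} \G_\otimes(N)\otimes_{\Gamma^d_k}\G_\otimes(N').
\]

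Next I would use $S^d\cong S^d\otimes_{\Gamma^d_k} S^d$ from \eqref{S^dotimesS^d} to split the $S^d$ factor and rebracket via the symmetric monoidal structure:
\[
S^d\otimes_{\Gamma^d_k}\G_\otimes(N)\otimes_{\Gamma^d_k}\G_\otimes(N') \;\cong\; \bigl(S^d\otimes_{\Gamma^d_k}\G_\otimes(N)\bigr)\otimes_{\Gamma^d_k}\bigl(S^d\otimes_{\Gamma^d_k}\G_\otimes(N')\bigr).
\]
Applying Theorem~\ref{Th:GotimesF} again, each factor collapses to $\G_\otimes\F\G_\otimes(N)$ and $\G_\otimes\F\G_\otimes(N')$, respectively, and invoking $\F\G_\otimes\cong\id$ one last time yields $\G_\otimes(N)\otimes_{\Gamma^d_k}\G_\otimes(N')$.

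There is no serious obstacle; the argument is purely formal once one decides to alternate the two identities $\F\G_\otimes\cong\id$ and $\G_\otimes\F(-)\cong S^d\otimes -$. The only point that deserves a moment's care is the naturality of the assembled chain of isomorphisms — since each step is the $\Phi$ of \cite[Theorem 4.4]{AR2015}, the counit/unit of an adjunction, or the canonical isomorphisms of the symmetric monoidal structure, naturality in both arguments $N,N'$ is automatic.
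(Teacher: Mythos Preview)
Your argument is correct and is exactly the argument the paper has in mind: the paper states the corollary with only the one-line hint ``Recall $S^d\cong S^d\otimes S^d$; thus, by using the fact that $\F$ preserves the monoidal structure, we get the following'' and then places a \qed, so your chain of isomorphisms is precisely the intended unpacking of that hint. A marginally shorter variant would be to note directly that $\G_\otimes(N)\cong\G_\otimes\F\G_\otimes(N)\cong S^d\otimes\G_\otimes(N)$, which lets you absorb the single $S^d$ factor into just one of the two tensorands without invoking $S^d\otimes S^d\cong S^d$ separately; but this is the same argument in a different bracketing.
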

 However, note that the tensor unit $\I_{k\frS_d}$ is mapped under $\G_{\otimes}$ to $S^d$
which is not the tensor unit in $\Rep\Gamma^d_k$.
Using Lemma \ref{lem:2.7-2.8}, we get the following description of 
the Schur functor composed with its left adjoint:
\begin{corollary}
We can express the endofunctor  $\G_{\otimes}\F$ by duals, namely
 \[\pushQED{\qed}
\G_{\otimes}\F(X)\cong S^d\otimes_{\Gamma^d_k} X\cong\HOM(X,\Gamma^d)^\circ=(X^\vee)^\circ\qedhere
\popQED\]
\end{corollary}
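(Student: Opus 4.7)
The plan is to chain together exactly two previously established facts: Theorem~\ref{Th:GotimesF} gives the first isomorphism $\G_{\otimes}\F(X)\cong S^d\otimes_{\Gamma^d_k} X$ for free, so all the real work is in producing the second isomorphism $S^d\otimes_{\Gamma^d_k} X\cong\HOM(X,\Gamma^d)^\circ$.

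For the second isomorphism, the key observation is the duality identity $S^d=(\Gamma^d)^\circ$ recorded in the subsection on the Kuhn dual. With this in hand, I would apply the first of the two natural isomorphisms in Lemma~\ref{lem:2.7-2.8} with $Y:=\Gamma^d$, so that $Y^\circ=S^d$; this directly yields
\[
X\otimes_{\Gamma^d_k} S^d \;=\; X\otimes_{\Gamma^d_k}(\Gamma^d)^\circ \;\cong\; \HOM_{\Gamma^d_k}(X,\Gamma^d)^\circ \;=\; (X^\vee)^\circ,
\]
and using symmetry of the internal tensor product to rewrite $X\otimes_{\Gamma^d_k} S^d$ as $S^d\otimes_{\Gamma^d_k} X$ finishes the chain.

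The main (and essentially only) obstacle is the finitely presented hypothesis attached to Lemma~\ref{lem:2.7-2.8}. For $X$ finitely presented there is nothing further to do. To extend to arbitrary $X\in\Rep\Gamma^d_k$ I would mimic the reduction used in the proof of Theorem~\ref{Th:GotimesF}: pick a presentation $\Gamma^\mu\to\Gamma^\lambda\to X\to 0$ by finitely generated projectives and exploit that $S^d\otimes_{\Gamma^d_k}(-)$ is right exact, so both sides are determined by their values on the representables $\Gamma^\lambda$, where the Lemma applies directly. Since this last step is a routine diagram chase rather than a substantive computation, I expect no further difficulty, and the proof essentially reduces to citing Theorem~\ref{Th:GotimesF} and Lemma~\ref{lem:2.7-2.8}.
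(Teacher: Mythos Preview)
Your approach is exactly the paper's: the corollary is stated with an inline \qed, and the sentence preceding it simply reads ``Using Lemma~\ref{lem:2.7-2.8}, we get the following description\ldots''. So the paper, like you, combines Theorem~\ref{Th:GotimesF} for the first isomorphism with the identity $X\otimes(\Gamma^d)^\circ\cong\HOM(X,\Gamma^d)^\circ$ from Lemma~\ref{lem:2.7-2.8} (taking $Y=\Gamma^d$, $Y^\circ=S^d$) for the second.

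One small remark on your extension paragraph: the presentation $\Gamma^\mu\to\Gamma^\lambda\to X\to 0$ you describe already presupposes that $X$ is finitely presented, so it does not actually extend the result beyond the hypothesis of Lemma~\ref{lem:2.7-2.8}; for truly arbitrary $X$ you would need presentations by infinite coproducts of representables, and then one must check that $\HOM(-,\Gamma^d)^\circ$ behaves well with respect to those, which is less automatic. The paper does not address this either and implicitly carries the finitely presented hypothesis from the lemma, so your core argument is complete as it stands.
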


\section{The right adjoint of the Schur functor}
This section provides analogous results to those in the preceeding section, 
now for the right adjoint of the Schur functor. In particular, we will
see how the right adjoint can be expressed  in terms of the monoidal structure
of strict polynomial functors.

Let $V\in\Gamma^d\P_k$, $X\in\Rep\Gamma^d_k$ and $N\in\Mod\End(\Gamma^\omega)$. We write $\End(\Gamma^{d,V})$
for $\End_{\Gamma^d_k}(\Gamma^{d,V})$ and consider $\Hom_{\Gamma^d_k}(\Gamma^{d,V},X)$
as a right $\End(\Gamma^{d,V})$-module and  $\Hom_{\Gamma^d_k}(\Gamma^\omega,\Gamma^{d,V})$ as an 
$\End(\Gamma^{d,V})$-$\End(\Gamma^\omega)$-bimodule.
By the usual tensor-hom adjunction we then get the 
following isomorphism
\begin{align*}\Hom_{\End(\Gamma^{d,V})}(\Hom_{\Gamma^d_k}(\Gamma^{d,V},X),\Hom_{\End(\Gamma^\omega)}(\Hom_{\Gamma^d_k}(\Gamma^\omega,\Gamma^{d,V}),N))\\
\cong\Hom_{\End(\Gamma^\omega)}(\Hom_{\Gamma^d_k}(\Gamma^{d,V},X)\otimes_{\End(\Gamma^{d,V})}\Hom_{\Gamma^d_k}(\Gamma^\omega,\Gamma^{d,V}),N).\end{align*}
On the other hand, since $\Hom_{\Gamma^d_k}(\Gamma^{d,V},\Gamma^\omega)$ is finitely generated projective over $\End(\Gamma^{d,V})$, we also have
\begin{align*}
\Hom_{\End(\Gamma^{d,V})}(\Gamma^\omega(V),X(V))&
\cong\Hom_{\End(\Gamma^{d,V})}(\Hom_{\Gamma^d_k}(\Gamma^{d,V},\Gamma^\omega),\Hom_{\Gamma^d_k}(\Gamma^{d,V},X))\\
&\cong \Hom_{\Gamma^d_k}(\Gamma^{d,V},X) \otimes_{\End(\Gamma^{d,V})}  \Hom_{\Gamma^d_k}(\Gamma^\omega,\Gamma^{d,V})
\end{align*} and thus 
\begin{align*}\Hom_{\End(\Gamma^{d,V})}(\Hom_{\Gamma^d_k}(\Gamma^{d,V},X),\Hom_{\End(\Gamma^\omega)}(\Hom_{\Gamma^d_k}(\Gamma^\omega,\Gamma^{d,V}),N))\\
\cong\Hom_{\End(\Gamma^\omega)}(\Hom_{\End(\Gamma^{d,V})}(\Gamma^\omega(V),X(V)),N).\end{align*}
Since $\Mod\End(\Gamma^{d,V})\cong\Rep\Gamma^d_k$ for $V\coloneqq k^n$ with $n\geq d$ by (\ref{eqschuralgebra}) 
and $X\cong\Hom_{\Gamma^d_k}(\Gamma^{d,-},X) $ this isomorphism finally becomes
\begin{align*}\Hom_{\Gamma^d_k}(X,\Hom_{\End(\Gamma^\omega)}(\Hom_{\Gamma^d_k}(\Gamma^\omega,\Gamma^{d,-}),N))\\
\cong\Hom_{\End(\Gamma^\omega)}(\Hom_{\Gamma^d_k}(\Gamma^\omega,X),N).\end{align*}
Thus, 
 $\F=\Hom_{\Gamma^d_k}(\Gamma^\omega,-)$ has a right adjoint, namely
\begin{align*}
 \G_{\Hom}\colon \Mod\End(\Gamma^\omega)&\rightarrow \Rep\Gamma^d_k\\
 N&\mapsto\Hom_{\End(\Gamma^\omega)}(\Hom_{\Gamma^d_k}(\Gamma^\omega,\Gamma^{d,-}),N)\\
 \intertext{in terms of modules for the symmetric group algebra this reads }
 \G_{\Hom}\colon k\frS_d\Mod&\rightarrow \Rep\Gamma^d_k\\
 N&\mapsto\Hom_{k\frS_d}(\Hom_{k\frS_d}((-)^{\otimes d},k\frS_d),N).\\
\end{align*}
We will denote the unit by $\eta_{\Hom}\colon\id_{\End(\Gamma^\omega)}\to\G_{\Hom}\F$ and the counit by \linebreak
$\varepsilon_{\Hom}\colon\F\G_{\Hom}\to\id_{\Rep\Gamma^d_k}$. 
Note that  $\G_{\Hom}$ is fully faithful,
hence the counit $\varepsilon_{\Hom}$ is an isomorphism,  i.e.\ $\F\G_{\Hom}(X)\cong X$ for all $X\in\Rep\Gamma^d_k$.

\medskip

Again, we are interested in the composition $\G_{\Hom}\F$. We have the following result, dual to Proposition \ref{Prop:GotimesF}:

\medskip

\begin{proposition}\label{Prop:GHomF}
 There is a natural isomorphism
 \[\G_{\Hom}\F(X)\cong X\]
for all  $X\in\add \Gamma$.
\end{proposition}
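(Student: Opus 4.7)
The plan is to mirror the proof of Proposition~\ref{Prop:GotimesF} but in a streamlined form: because $\G_{\Hom}$ is the \emph{right} adjoint, projective representables $\Gamma^{d,V}$ sit naturally on the left-hand side of the relevant $\Hom$-pairings, so we can avoid any detour through the Kuhn dual. The key inputs are the Yoneda isomorphism (\ref{Yoneda}), the $(\F, \G_{\Hom})$-adjunction, and the equivalence $\F\colon \add\Gamma \xrightarrow{\simeq} \add M$ from \cite[Lemma~4.3]{AR2015}.

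First I would observe that for every $V \in \P_k$ the representable $\Gamma^{d,V}$ belongs to $\add\Gamma$. Indeed, $V$ is a direct summand of some $k^n$, so $\Hom(V,W)$ is a summand of $\Hom(k^n,W)$ naturally in $W$, and applying $\Gamma^d$ preserves this splitting; hence $\Gamma^{d,V}$ is a direct summand of $\Gamma^{d,k^n}$, which lies in $\add\Gamma$. With this in hand, for $X \in \add\Gamma$ and $V \in \Gamma^d\P_k$ I would chain the following natural isomorphisms:
\begin{align*}
X(V) &\cong \Hom_{\Gamma^d_k}(\Gamma^{d,V}, X)
   && \text{(Yoneda \eqref{Yoneda})} \\
&\cong \Hom_{k\frS_d}(\F(\Gamma^{d,V}), \F(X))
   && \text{(equivalence } \F|_{\add\Gamma}\text{)} \\
&\cong \Hom_{\Gamma^d_k}(\Gamma^{d,V}, \G_{\Hom}\F(X))
   && \text{(adjunction)} \\
&\cong \G_{\Hom}\F(X)(V)
   && \text{(Yoneda \eqref{Yoneda}).}
\end{align*}
Each isomorphism is natural in $V$, so the composite provides a natural isomorphism of functors $X \cong \G_{\Hom}\F(X)$ on $\Gamma^d\P_k$.

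There is no real obstacle here; the only step that requires a moment's attention is the second isomorphism, which demands that both arguments of the $\Hom$ lie in $\add\Gamma$ so that the equivalence $\F|_{\add\Gamma}$ applies. This is exactly what the preliminary observation about $\Gamma^{d,V}$ supplies. In contrast to the proof of Proposition~\ref{Prop:GotimesF}, no use of Lemma~\ref{lem:2.7-2.8} or of Kuhn duality is needed, which reflects the general pattern that right adjoints cooperate with projectives in the same way that left adjoints cooperate with injectives.
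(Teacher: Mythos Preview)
Your proof is correct and follows exactly the same chain of isomorphisms as the paper's own argument: Yoneda, full faithfulness of $\F$ on $\add\Gamma$, the $(\F,\G_{\Hom})$-adjunction, and Yoneda again. Your added justification that $\Gamma^{d,V}\in\add\Gamma$ for every $V\in\P_k$ makes explicit a point the paper uses silently, so if anything your write-up is slightly more complete.
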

\begin{proof}
 Let $V\in\Gamma^d\P_k$ and $X\in\add \Gamma$. Due to the the Yoneda isomorphism (\ref{Yoneda}) and the equivalence 
 of $\add\Gamma$ and $\add M$ we have the following sequence of isomorphisms 
 \begin{align*}X(V)&{\cong}\Hom_{\Gamma^d_k}(\Gamma^{d,V},X)\\
  &{\cong}\Hom_{k\frS_d}(\F(\Gamma^{d,V}),\F(X))\\
   &{\cong}\Hom_{\Gamma^d_k}(\Gamma^{d,V},\G_{\Hom}\F(X))\\
   &{\cong}\G_{\Hom}\F(X)(V)
\end{align*}
and thus $\G_{\Hom}\F(X)\cong X$.
\end{proof}

\begin{corollary}\label{Cor:eqGHom}
The functor $\G_{\Hom}$ restricted to $\add M$ is an inverse of $\F|_{\add \Gamma}$, i.e.\ we have the following equivalences of categories
\[\pushQED{\qed}\xymatrix{
 \add \Gamma\ar@/^/[r]^{\F}&\add M\ar@/^/[l]^{\G_{\Hom}}
 }\qedhere
\popQED\]
\end{corollary}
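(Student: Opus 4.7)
The plan is to assemble the equivalence from three ingredients already on the table: the equivalence $\F|_{\add\Gamma}\colon\add\Gamma\xrightarrow{\sim}\add M$ of \cite[Lemma 4.3]{AR2015}, the isomorphism $\G_{\Hom}\F(X)\cong X$ for $X\in\add\Gamma$ coming from Proposition \ref{Prop:GHomF}, and the counit isomorphism $\varepsilon_{\Hom}\colon\F\G_{\Hom}(N)\xrightarrow{\sim}N$ for every $N$, coming from the full faithfulness of $\G_{\Hom}$ noted right after its construction. Once the restrictions $\F|_{\add\Gamma}$ and $\G_{\Hom}|_{\add M}$ are seen to be well-defined functors between these subcategories, the unit and counit above witness them as mutually quasi-inverse.

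First I would check well-definedness of $\G_{\Hom}|_{\add M}$. Every object of $\add M$ is a direct summand of a finite direct sum of transitive permutation modules $M^\lambda$, and under the equivalence $\F|_{\add\Gamma}$ we have $M^\lambda\cong\F(\Gamma^\lambda)$. Being a right adjoint, $\G_{\Hom}$ preserves finite direct sums and direct summands, so
\[\G_{\Hom}(M^\lambda)\cong\G_{\Hom}\F(\Gamma^\lambda)\cong\Gamma^\lambda\]
by Proposition \ref{Prop:GHomF}. Hence $\G_{\Hom}$ sends $\add M$ into $\add\Gamma$. The functor $\F|_{\add\Gamma}$ lands in $\add M$ by the very definition of the latter and the cited equivalence.

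Next, the two required natural isomorphisms are read off directly: for $X\in\add\Gamma$, Proposition \ref{Prop:GHomF} provides $\G_{\Hom}\F(X)\cong X$, naturally in $X$, and for $N\in\add M\subseteq k\frS_d\Mod$ the counit $\varepsilon_{\Hom}$ gives $\F\G_{\Hom}(N)\cong N$, naturally in $N$. Restricting both to the subcategories in question yields the claimed mutually inverse equivalence.

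There is no real obstacle here; the only point that needs attention is that $\add\Gamma$ and $\add M$ are closed under finite direct sums and summands in their ambient categories so that the images of $\G_{\Hom}$ and $\F$ stay inside them, and that the natural isomorphisms above are indeed natural (which is automatic since they come from a unit/counit of adjunction and from a previously established natural isomorphism). Thus the corollary follows with essentially no further computation.
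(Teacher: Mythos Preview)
Your proof is correct and is exactly the argument the paper intends: the corollary is stated with an inline \qed and no separate proof, so it is meant to follow immediately from Proposition~\ref{Prop:GHomF} together with the already recorded equivalence $\F|_{\add\Gamma}\colon\add\Gamma\xrightarrow{\sim}\add M$ and the counit isomorphism $\varepsilon_{\Hom}$. You have simply spelled out these implicit steps, including the well-definedness of $\G_{\Hom}|_{\add M}$, which is the only point requiring a moment's thought.
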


\begin{remark}
Suppose that $k$ is a field of characteristic $\geq 5$. In \cite[Theorem 3.8.1.]{HN2004} it is shown that on $\Filt(\Delta)$,
the full subcategory of Weyl filtered modules, $\G_{\Hom}$ is an inverse to $\F$. The subcategory  $\Filt(\Delta)$ contains
the subcategory $\add\Gamma$, so in the case of a field of characteristic $\geq 5$, Corollary \ref{Cor:eqGHom} follows also
from \cite{HN2004}. But note that Corollary \ref{Cor:eqGHom} is independent of any assumption on the
commutative ring $k$.
\end{remark}

If we do not restrict to the subcategory $\add\Gamma $, the composition $\G_{\Hom}\F$ is not isomorphic to the identity. 
Though we have the following result, dual to Theorem \ref{Th:GotimesF}:

\begin{theorem}\label{Th:GHomF}
There is a natural isomorphism
\[\G_{\Hom}\F(X)\cong \HOM(S^d,X).\]
\end{theorem}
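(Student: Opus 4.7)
The plan is to deduce Theorem \ref{Th:GHomF} from Theorem \ref{Th:GotimesF} via a Yoneda argument, rather than imitating the explicit construction of a unit map used in the proof of Theorem \ref{Th:GotimesF}. The strategy is to show that, for every test object $Y\in\Rep\Gamma^d_k$, both $\Hom_{\Gamma^d_k}(Y,\HOM(S^d,X))$ and $\Hom_{\Gamma^d_k}(Y,\G_{\Hom}\F(X))$ are naturally isomorphic to the same abelian group $\Hom_{k\frS_d}(\F(Y),\F(X))$, and then to specialize to representables $Y=\Gamma^{d,V}$.

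For the $\G_{\Hom}$-side this is immediate from the adjunction $\F\dashv\G_{\Hom}$. For the $\HOM$-side I would chain together the internal-hom adjunction, Theorem \ref{Th:GotimesF} to rewrite $S^d\otimes Y$ as $\G_{\otimes}\F(Y)$, and the adjunction $\G_{\otimes}\dashv\F$:
\[
\Hom_{\Gamma^d_k}(Y,\HOM(S^d,X)) \cong \Hom_{\Gamma^d_k}(S^d\otimes Y,X) \cong \Hom_{\Gamma^d_k}(\G_{\otimes}\F(Y),X) \cong \Hom_{k\frS_d}(\F(Y),\F(X)).
\]
Composing the two chains yields a natural isomorphism $\Hom_{\Gamma^d_k}(Y,\HOM(S^d,X))\cong\Hom_{\Gamma^d_k}(Y,\G_{\Hom}\F(X))$ in both $X$ and $Y$. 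Setting $Y=\Gamma^{d,V}$ and applying the Yoneda isomorphism (\ref{Yoneda}) on both sides gives $\HOM(S^d,X)(V)\cong\G_{\Hom}\F(X)(V)$ naturally in $V$, which is the claimed isomorphism of strict polynomial functors.

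The main obstacle is really just bookkeeping: one must check that every step in the chain is natural in both arguments, so that Yoneda produces an actual isomorphism of functors that is also natural in $X$. Since each ingredient is either a standard adjunction isomorphism, the symmetry of the internal tensor product, or Theorem \ref{Th:GotimesF}, the required naturality is inherited from those pieces.
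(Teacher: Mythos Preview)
Your argument is correct and gives a genuinely different proof from the paper's. The paper proceeds in strict analogy with the proof of Theorem~\ref{Th:GotimesF}: it first shows that $\F$ is a \emph{closed} functor (i.e.\ preserves the internal hom), uses this to construct an explicit comparison map $\kappa_{\I,X}\colon\HOM(S^d,X)\to\G_{\Hom}\F(X)$, and then checks that $\kappa_{\I,X}$ is an isomorphism by reducing, via left exactness of both sides, to the injective objects $X=S^\lambda$, where Proposition~\ref{Prop:GHomF} and Corollary~\ref{Cor:eqGHom} apply. By contrast, you bypass the closedness of $\F$ and the construction of an explicit map entirely, deducing the result from Theorem~\ref{Th:GotimesF} together with the two adjunctions $\G_{\otimes}\dashv\F\dashv\G_{\Hom}$ and a Yoneda argument. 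Your route is shorter and makes transparent that Theorem~\ref{Th:GHomF} is a formal consequence of Theorem~\ref{Th:GotimesF}; the paper's route, on the other hand, yields an explicit description of the isomorphism and highlights the duality between the proofs of the two theorems (right exact reduction to projectives versus left exact reduction to injectives), at the cost of needing the auxiliary fact that $\F$ is closed.
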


\begin{proof} 
By using the fact that $\F$ is monoidal (\cite[Theorem 4.4]{AR2015}) and some
additional calculations, one can show that
 $\F$ is also a closed functor, i.e.\ there is a natural isomorphism 
\[\Psi_{X,Y}\colon\HOM_{k\frS_d}(\F(X),\F(Y))\to\F(\HOM_{\Rep\Gamma^d_k}(X,Y)).\]
Using this isomorphism and by adjunction we get a sequence of isomorphisms
 \begin{align*}&\Hom_{k\frS_d}(\HOM(\F(X^\circ),N),\HOM(\F(X^\circ),N))\\
 \cong&\Hom_{k\frS_d}(\HOM(\F(X^\circ),N),\HOM(N^*,\F(X^\circ)^*))\\
\cong&\Hom_{k\frS_d}(\HOM(\F(X^\circ),\F\G_{\Hom}(N)),\HOM(N^*,\F(X)))\\
\cong&\Hom_{k\frS_d}(\F(\HOM(X^\circ,\G_{\Hom}(N))),\HOM(N^*,\F(X)))\\
\cong&\Hom_{\Gamma^d_k}(\HOM(X^\circ,\G_{\Hom}(N)),\G_{\Hom}(\HOM(N^*,\F(X))))\\
\cong&\Hom_{\Gamma^d_k}(\HOM(\G_{\Hom}(N)^\circ,X),\G_{\Hom}(\HOM(N^*,\F(X)))).
 \end{align*}
Thus, the identity on $\HOM(\F(X^\circ),N)$ yields a map
\[\kappa_{N,X}
\colon\HOM(\G_{\Hom}(N)^\circ,X)\to\G_{\Hom}(\HOM(N^*,\F(X^\circ))).\]
 By setting $N\coloneqq\I$, the trivial module, 
we get a map
 \[\kappa_{\I,X}\colon\HOM(\G_{\Hom}(\I)^\circ,X)\to\G_{\Hom}(\HOM(\I,\F(X))).\]
Similarly to the case of $\G_{\otimes}$ this is an isomorphism. This time, we use the fact that
since $\HOM(\G_{\Hom}(\I),-)$ and $\G_{\Hom}(\HOM(\I,\F(-)))$ are left exact functors it is enough to show that $\kappa_{\I,X}$ is
an isomorphism for $X=S^\lambda$ injective. 
But $\F(\Gamma^d)\cong \I$, thus  we know by  Proposition \ref{Prop:GHomF} that $\G_{\Hom}(\I)\cong \Gamma^d$. It follows that
\[\HOM(\G_{\Hom}(\I)^\circ,S^\lambda)\cong \Gamma^\lambda\] by (\ref{S,S}) and hence
\[(\eta_{\Hom})_{\HOM(\G_{\Hom}(\I)^\circ,S^\lambda)}\colon\HOM(\G_{\Hom}(\I)^\circ,S^\lambda)
\to\G_{\Hom}\F(\HOM(\G_{\Hom}(\I)^\circ,S^\lambda))\]
is an isomorphism by Corollary \ref{Cor:eqGHom}.
Similarly as before $\kappa_{\I,X}$ is the composition of this isomorphism and further isomorphisms, hence is
itself and isomorphism.
Identifying $\G_{\Hom}(\I)^\circ$ with $S^d$ and $\HOM(\I,\F(X))$
with $\F(X)$
we finally get the desired isomorphism
\[\kappa_{\I,X}\colon\HOM(S^d,X)\to\G_{\Hom}\F(X).\qedhere\]
\end{proof}

\begin{corollary} The functor $\G_{\Hom}$ preserves the internal hom up to duality, i.e.\
 \[\G_{\Hom}\F(\HOM(X^\circ,Y))\cong\HOM(\G_{\Hom}\F(X)^\circ,\G_{\Hom}\F(Y))\] and
  \[\pushQED{\qed}\G_{\Hom}\HOM(N^*,{N'})\cong\HOM(\G_{\Hom}(N)^\circ,\G_{\Hom}({N'})).\qedhere
\popQED\]
\end{corollary}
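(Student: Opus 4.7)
My plan is to deduce both displayed isomorphisms from Theorem~\ref{Th:GHomF}, which identifies $\G_{\Hom}\F(-)$ with $\HOM(S^d,-)$, together with the closed symmetric monoidal structure of $\Rep\Gamma^d_k$, Lemma~\ref{lem:2.7-2.8}, and the idempotency $S^d\otimes S^d\cong S^d$ from~(\ref{S^dotimesS^d}).

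For the first isomorphism I would compute each side separately and check that both collapse to $\HOM(S^d\otimes X^\circ,Y)$. On the left, Theorem~\ref{Th:GHomF} rewrites $\G_{\Hom}\F(\HOM(X^\circ,Y))$ as $\HOM(S^d,\HOM(X^\circ,Y))$, and the internal tensor--hom adjunction then gives $\HOM(S^d\otimes X^\circ,Y)$. For the right-hand side, two applications of Theorem~\ref{Th:GHomF} produce $\HOM(\HOM(S^d,X)^\circ,\HOM(S^d,Y))$; the inner Kuhn dual simplifies by Lemma~\ref{lem:2.7-2.8} (with $S^d$ as the finitely presented object) to $\HOM(S^d,X)^\circ\cong S^d\otimes X^\circ$, after which a further use of tensor--hom adjunction yields $\HOM(S^d\otimes S^d\otimes X^\circ,Y)$, and~(\ref{S^dotimesS^d}) collapses the two copies of $S^d$ into one.

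For the second isomorphism I would specialise the first to $X:=\G_{\Hom}(N)$ and $Y:=\G_{\Hom}(N')$. Since $\G_{\Hom}$ is fully faithful the counit $\varepsilon_{\Hom}$ is invertible, so $\F(X)\cong N$ and $\F(Y)\cong N'$, and the right-hand side of the specialised first isomorphism is already $\HOM(\G_{\Hom}(N)^\circ,\G_{\Hom}(N'))$. To identify the left-hand side with $\G_{\Hom}\HOM(N^*,N')$ it suffices, again by full faithfulness of $\G_{\Hom}$, to check $\F(\HOM(X^\circ,Y))\cong\HOM(N^*,N')$; and the closedness of $\F$ established in the proof of Theorem~\ref{Th:GHomF} reduces this further to the compatibility $\F(Z^\circ)\cong\F(Z)^*$ of the Schur functor with duality.

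The only non-formal input is this last compatibility, and it is the main obstacle. It is used implicitly in the proof of Theorem~\ref{Th:GHomF} (where $\F(X^\circ)^*$ is silently interchanged with $\F(X)$) and can be justified either by combining the Yoneda description of $\F$ on a representable $\Gamma^{d,V}$ with Lemma~\ref{lem:2.7-2.8}, or more conceptually by noting that $\F$ is closed symmetric monoidal and that Kuhn duality corresponds, under $\F$, to the $k$-linear duality on symmetric group modules. Once this piece is granted, the remainder of the argument is purely formal manipulation of adjunctions.
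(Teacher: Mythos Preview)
Your argument is correct and matches the approach the paper implicitly has in mind: the corollary is stated with a bare \qed\ and no explicit proof, but the surrounding text (in particular the analogous corollary for $\G_\otimes$, justified by $S^d\otimes S^d\cong S^d$ and the monoidality of $\F$) makes clear that the intended derivation is precisely the one you give---apply Theorem~\ref{Th:GHomF} to replace $\G_{\Hom}\F$ by $\HOM(S^d,-)$, use Lemma~\ref{lem:2.7-2.8} and the tensor--hom adjunction, and collapse $S^d\otimes S^d$ to $S^d$. Your identification of the compatibility $\F(Z^\circ)\cong\F(Z)^*$ as the one non-formal input is also accurate; the paper uses this fact tacitly in the proof of Theorem~\ref{Th:GHomF} and states it explicitly (still without proof) in the section comparing the two adjoints.
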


\begin{remark}
 In general, $\G_{\Hom}$ does not preserve the internal tensor product, e.g.\ 
 defining $\sgn$ to be the sign-representation in $k\frS_d\Mod$ 
we get $\G_{\Hom}(\sgn)\cong \Lambda^d$ if $2$ is invertible in $k$, but  $\G_{\Hom}(\I)\cong\Gamma^d$ and thus
\begin{align*}
\G_{\Hom}(\sgn\otimes_k\sgn)
=\G_{\Hom}(\I)\cong\Gamma^d\neq S^d&\cong\Lambda^d\otimes_{\Gamma^d_k}\Lambda^d\\
&\cong\G_{\Hom}(\sgn)\otimes_{\Gamma^d_k}\G_{\Hom}(\sgn).
              \end{align*}
\end{remark}

\begin{corollary}Let $X\in\rep\Gamma^d_k$, i.e.\ $X^{\circ\circ}\cong X$.
We can express the endofunctor  $\G_{\Hom}\F$ by duals, namely
\[\pushQED{\qed}\G_{\Hom}\F(X)\cong \HOM(S^d,X)\cong\HOM(X^\circ,\Gamma^d)=(X^\circ)^\vee.\qedhere
\popQED\]
\end{corollary}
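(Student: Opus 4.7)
The plan is to split the chain of isomorphisms into its two substantive parts. The first isomorphism $\G_{\Hom}\F(X)\cong \HOM(S^d,X)$ is already established by Theorem~\ref{Th:GHomF} and holds for every $X\in\Rep\Gamma^d_k$ without further hypothesis, so it can be quoted directly. The last equality $\HOM(X^\circ,\Gamma^d)=(X^\circ)^\vee$ is purely definitional, since the monoidal dual is defined by $Y^\vee\coloneqq\HOM(Y,\Gamma^d)$. Hence the only genuine content to verify is the middle isomorphism $\HOM(S^d,X)\cong\HOM(X^\circ,\Gamma^d)$, and this is where the hypothesis $X\in\rep\Gamma^d_k$ (equivalently $X^{\circ\circ}\cong X$) enters.

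To establish the middle isomorphism I would invoke the symmetry identity from Lemma~\ref{lem:2.7-2.8}, namely $\HOM(A,B^\circ)\cong\HOM(B,A^\circ)$. Setting $A=S^d$ and $B=X^\circ$, and using the reflexivity $X^{\circ\circ}\cong X$ provided by the hypothesis, I obtain the chain
\[
\HOM(S^d,X)\;\cong\;\HOM(S^d,X^{\circ\circ})\;\cong\;\HOM(X^\circ,(S^d)^\circ)\;=\;\HOM(X^\circ,\Gamma^d),
\]
where the final identification uses $(S^d)^\circ=\Gamma^d$, which follows from the listed fact $(\Gamma^d)^\circ=S^d$ together with the involutivity of Kuhn duality on finite representations.

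Putting the three pieces together yields the full chain claimed in the corollary:
\[
\G_{\Hom}\F(X)\;\cong\;\HOM(S^d,X)\;\cong\;\HOM(X^\circ,\Gamma^d)\;=\;(X^\circ)^\vee.
\]

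There is essentially no obstacle here, as the statement is an assembly of Theorem~\ref{Th:GHomF} with the symmetry property of Kuhn duality from Lemma~\ref{lem:2.7-2.8}. The only point requiring care is the application of the symmetry isomorphism, which a priori interchanges $A$ and $B$ via their Kuhn duals; the hypothesis $X^{\circ\circ}\cong X$ is exactly what allows one to reinterpret $\HOM(S^d,X)$ as $\HOM(S^d,X^{\circ\circ})$ so that the symmetry can be applied to produce $\HOM(X^\circ,\Gamma^d)$ on the nose rather than up to a further double dual.
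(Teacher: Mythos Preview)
Your proposal is correct and matches the paper's intended argument. The paper states this corollary without proof (note the inline \texttt{\textbackslash qed}), treating it as immediate from Theorem~\ref{Th:GHomF}, the first identity of Lemma~\ref{lem:2.7-2.8}, and the definition of $(-)^\vee$; your write-up simply makes these implicit steps explicit, including the role of the reflexivity hypothesis $X^{\circ\circ}\cong X$.
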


\section{Comparing both adjoints}
The results in the previous two sections allow us to relate the left and the right adjoint. In the
case of $k$ a field of characteristic $p$, this has already been done in a more general setting by N.~Kuhn in 
\cite[Theorem~6.10, Lemma~6.11]{Ku2002}.

In our setting, $k$ is still an (arbitrary) commutative ring and we obtain

\begin{proposition}
The left and the right adjoints of the Schur functor are related by taking duals, namely
 \begin{align*}
(\G_{\otimes}\circ\F(X))^\circ\cong \G_{\Hom}\circ\F(X^\circ)
 \end{align*} 
 and 
 \[\G_{\otimes}(N)^\circ\cong\G_{\Hom}(N^*)\]
 for all $X\in\Rep\Gamma^d_k$ and $N\in k\frS_d\Mod$.
\end{proposition}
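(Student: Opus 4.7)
The plan is to deduce the first identity directly from the structure theorems for the two compositions $\G_{\otimes}\F$ and $\G_{\Hom}\F$, and then to reduce the second identity to the first.

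For the first isomorphism I would chain the three identifications
\begin{align*}
(\G_{\otimes}\F(X))^\circ \;\cong\; (S^d\otimes_{\Gamma^d_k}X)^\circ \;\cong\; \HOM(S^d,X^\circ) \;\cong\; \G_{\Hom}\F(X^\circ),
\end{align*}
where the outer two isomorphisms are Theorems~\ref{Th:GotimesF} and \ref{Th:GHomF}, and the middle step is the second displayed formula of Lemma~\ref{lem:2.7-2.8} applied with the finitely presented functor $S^d$ in the first slot.

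For the second identity, given $N\in k\frS_d\Mod$ I would set $Y:=\G_{\otimes}(N)$, so that $\F(Y)\cong N$ since $\eta_\otimes$ is an isomorphism. Applying the identity just proved to $Y$ gives
\begin{align*}
\G_{\otimes}(N)^\circ \;=\; (\G_{\otimes}\F(Y))^\circ \;\cong\; \G_{\Hom}\F(Y^\circ),
\end{align*}
so it remains to show that $\F$ intertwines the Kuhn dual with the $k$-dual, i.e.\ $\F(Y^\circ)\cong\F(Y)^*$. To establish this, I would use Lemma~\ref{lem:2.7-2.8} (with second argument the tensor unit $\Gamma^d$) to rewrite $Y^\circ\cong\HOM(Y,S^d)$, apply the closed-functor isomorphism $\Psi$ of $\F$ established in the proof of Theorem~\ref{Th:GHomF}, and finally use $\F(S^d)\cong\I$, recorded in the proof of Theorem~\ref{Th:GotimesF}:
\begin{align*}
\F(Y^\circ) \;\cong\; \F(\HOM(Y,S^d)) \;\stackrel{\Psi}{\cong}\; \HOM_{k\frS_d}(\F(Y),\F(S^d)) \;\cong\; \F(Y)^*.
\end{align*}

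The main obstacle I anticipate is the finite presentation hypothesis required by Lemma~\ref{lem:2.7-2.8}: for arbitrary $N$ there is no reason for $Y=\G_{\otimes}(N)$ to be finitely presented. I expect to handle this by first checking the identity directly on $\add M$, where it reduces via Corollaries~\ref{Cor:eqGotimes} and \ref{Cor:eqGHom} together with the self-duality $(M^\lambda)^*\cong M^\lambda$ of permutation modules to the assertion $(S^\lambda)^\circ\cong\Gamma^\lambda$, and then extending to all of $k\frS_d\Mod$ using that both $N\mapsto\G_{\otimes}(N)^\circ$ and $N\mapsto\G_{\Hom}(N^*)$ are left exact contravariant in $N$, hence determined by their values on the generating class $\add M$.
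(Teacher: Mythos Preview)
Your proof of the first isomorphism is identical to the paper's. For the second, the paper follows the same reduction---choose $X$ with $\F(X)\cong N$ and invoke $\F(X^\circ)\cong\F(X)^*$---but simply asserts this last compatibility as a known fact, whereas you supply an argument for it and a workaround for the finite-presentation hypothesis; your approach is thus essentially the same, only more explicit on a point the paper leaves implicit.
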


\begin{proof}
 Using Theorem~\ref{Th:GHomF} and Theorem~\ref{Th:GotimesF} we get
\[ (\G_{\otimes}\circ\F(X))^\circ\cong (S^d\otimes X)^\circ\cong\HOM(S^d,X^\circ)\cong \G_{\Hom}\circ\F(X^\circ).\]
 By setting $N\coloneqq\F(X)$ and using the fact that $\F(X^\circ)\cong\F(X)^*$ we get the second isomorphism.
\end{proof}

 We have the following commutative diagram

\[\xymatrix{
 \rep\Gamma^d_k\ar@/_/[r]_{\F}\ar[d]_{(-)^\circ}&k\frS_d\mod\ar@/_/[l]_{\G_{\otimes}}\ar[d]^{(-)^*}\\
 (\rep\Gamma^d_k)^\op\ar@/^/[r]^{\F}&(k\frS_d\mod)^\op\ar@/^/[l]^{\G_{\Hom}}
}\]  
where the vertical arrows are equivalences of categories. The horizontal arrows become
equivalences when restricted to the following subcategories 
 
 \[\xymatrix{
 \add S\ar@/_/[r]_{\F}\ar[d]_{(-)^\circ}&\add M\ar@/_/[l]_{\G_{\otimes}}\ar[d]^{(-)^*}\\
 (\add \Gamma)^\op\ar@/^/[r]^{\F}&(\add M)^\op\ar@/^/[l]^{\G_{\Hom}}
}\]

\section{The tensor product of simple functors}
If $k$ is a field, the isomorphism classes of simple functors in $\Rep\Gamma^d_k$ are indexed by partitions $\lambda\in\Lambda^+(n,d)$.
Simple functors are self-dual, i.e.\ $L_\lambda^\circ\cong L_\lambda$, 
see e.g.\ \cite[Proposition 4.11]{Kr2014}. 

In \cite[Theorem~7.11]{Ku2002} a generalized Steinberg Tensor Product Theorem is proved that states that
simple functors are given by the external tensor product of twisted simple functors.
In our setting, this has been formulated also by Touz\'e:
\begin{theorem}[{\cite[Theorem 4.8]{Tou2015}}]
 Let $k$ be a field of charachteristic $p$. Let $\lambda^0,\dots,\lambda^r$ be $p$-restricted partitions, and let $\lambda=\sum_{i=0}^rp^i\lambda^i$. There is an isomorphism:
 \[L_\lambda\cong L_{\lambda^0}\boxtimes L_{\lambda^1}^{(1)}\boxtimes\dots\boxtimes L_{\lambda^r}^{(r)},\]
where $(-)^{(i)}$ denotes the $i$-th Frobenius twist.
\end{theorem}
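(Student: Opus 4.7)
The plan is to reduce the statement to the classical Steinberg tensor product theorem for the general linear group through the Schur-algebra equivalence (\ref{eqschuralgebra}). Fix $n\geq d=|\lambda|$ and use (\ref{eqschuralgebra}) to identify $\Rep\Gamma^d_k$ with the category of polynomial representations of $GL_n(k)$ of homogeneous degree $d$. Under this identification, each simple $L_\mu$ corresponds to the simple polynomial $GL_n$-representation of highest weight $\mu$.

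Next I would verify two compatibilities. First, the external tensor product satisfies $(X\boxtimes Y)(k^n)=X(k^n)\otimes_k Y(k^n)$ by definition, so after evaluation at $k^n$ it becomes the usual internal tensor product of polynomial $GL_n$-representations. Second, the Frobenius twist $(-)^{(i)}$ on strict polynomial functors, which raises homogeneous degree from $d$ to $p^id$, corresponds via (\ref{eqschuralgebra}) to the classical $i$-th Frobenius twist of rational $GL_n$-representations; in particular $L_\mu^{(i)}$ is identified with the simple polynomial representation of highest weight $p^i\mu$.

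Granting these two compatibilities, I would invoke the classical Steinberg tensor product theorem for $GL_n$ over a field of characteristic $p$: if $\lambda=\sum_{i=0}^rp^i\lambda^i$ with each $\lambda^i$ a $p$-restricted partition, then
\[L(\lambda)\cong L(\lambda^0)\otimes L(\lambda^1)^{(1)}\otimes\cdots\otimes L(\lambda^r)^{(r)}\]
as $GL_n$-representations. Translating back through the compatibilities above yields the claimed isomorphism in $\Rep\Gamma^d_k$. The main obstacle is the careful treatment of the Frobenius twist: it is defined on strict polynomial functors by precomposition with a twist endofunctor on $\Gamma^d\P_k$, and one must check that after evaluation at $k^n$ this agrees with the classical Frobenius endomorphism of $GL_n$, so that in particular highest weights are scaled by the expected factor of $p^i$. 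Once this bookkeeping is done, the theorem is a direct translation of the classical result, and the hypothesis that the $\lambda^i$ are $p$-restricted is precisely the condition needed for the classical Steinberg decomposition to give the simple of highest weight $\lambda$.
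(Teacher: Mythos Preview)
The paper does not give its own proof of this theorem: it is simply quoted from \cite[Theorem 4.8]{Tou2015}, after noting that a generalized Steinberg Tensor Product Theorem already appears in \cite[Theorem~7.11]{Ku2002}. So there is no proof in the paper to compare against.

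Your proposed argument---evaluating at $k^n$ via (\ref{eqschuralgebra}), identifying the external tensor product with the ordinary tensor product of $GL_n$-modules and the functorial Frobenius twist with the classical one, and then invoking the Steinberg tensor product theorem for $GL_n$---is the standard route and is essentially how Touz\'e obtains the statement. The only point that requires genuine care, which you already flag, is the compatibility of the Frobenius twist on strict polynomial functors with the classical Frobenius twist on rational $GL_n$-representations; once that is checked (and it is, in the literature on strict polynomial functors), the rest is bookkeeping. One minor caveat: the classical Steinberg theorem is stated for rational $GL_n$-representations rather than polynomial ones, so you should also observe that all modules appearing on both sides are in fact polynomial of the correct degree, so the identification via (\ref{eqschuralgebra}) applies.
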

Using this decomposition, Touz\'e shows that for calculating the internal tensor product of two simple functors, it is
enough to consider $p$-restricted partitions. Namely one has for $\lambda =\sum_{i=0}^rp^i\lambda^i$
and $\mu=\sum_{i=0}^sp^i\mu^i$
\[L_\lambda\otimes L_\mu\cong
\begin{cases}
(L_{\lambda^0}\otimes L_{\mu^0})\boxtimes(L_{\lambda^1}\otimes L_{\mu^1})^{(1)}\boxtimes\dots
\boxtimes(L_{\lambda^r}\otimes L_{\mu^r})^{(r)},& r=s, |\lambda^i|= |\mu^i| \\
0&\text{ otherwise.}
\end{cases}\](see \cite[Theorem 6.2]{Tou2015}). 
Unfortunately, the tensor product of two simple functors
is almost never simple, as we will see in the next theorem.

Throughout this section, let $p$ be the characteristic of $k$.

\subsection*{Mullineux map and truncated symmetric powers}
Denote by $m$ the Mulli\-neux map $m\colon\Lambda^+_p(n,d)\to\Lambda^+_p(n,d)$ that relates
simple $k\frS_d$-modules (see e.g.\ \cite[Chapter 4.2]{M1993}). Define $Q^d$ to be the truncated symmetric powers, i.e.\ the top of $S^d$. 
We have the following connection between tensor products of simple functors
and the Schur functor and its left adjoint:

\begin{lemma}\label{Lem:con_simple_adj}
Let  $\mu$ be a $p$-restricted partition, i.e.\ $\mu\in\Lambda^+_p(n,d)$. Then
 \[Q^d\otimes L_\mu\cong\G_{\otimes}\F(L_{\mu}), \quad \Lambda^d\otimes L_\mu\cong\G_{\otimes}\F(L_{m(\mu)})\]
\end{lemma}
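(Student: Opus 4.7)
My plan for this lemma is to use Theorem~\ref{Th:GotimesF} throughout to rewrite $\G_\otimes\F(-)$ as $S^d\otimes -$, and then to deduce both isomorphisms by a composition-factor analysis based on Touz\'e's tensor product formula for simple functors (the quoted Theorem~6.2 of \cite{Tou2015}).

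For the first isomorphism, Theorem~\ref{Th:GotimesF} reduces the claim to $S^d\otimes L_\mu\cong Q^d\otimes L_\mu$. Tensoring the defining short exact sequence $0\to\rad S^d\to S^d\to Q^d\to 0$ with $L_\mu$ and invoking right exactness, this further reduces to $\rad S^d\otimes L_\mu=0$. A standard induction on composition length---right exactness forces the middle term $B$ of a short exact sequence $0\to A\to B\to C\to 0$ to satisfy $B\otimes L_\mu=0$ whenever both $A\otimes L_\mu$ and $C\otimes L_\mu$ vanish---reduces matters to showing $L_\nu\otimes L_\mu=0$ for every composition factor $L_\nu$ of $\rad S^d$. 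Touz\'e's formula delivers exactly this vanishing when $\mu$ is $p$-restricted and $\nu$ is not, since the Frobenius decompositions cannot match in length. Finally, all composition factors of $\rad S^d$ are indexed by non-$p$-restricted partitions: $\F(S^d)$ is the simple trivial $k\frS_d$-module, $\F$ kills precisely the simples $L_\nu$ with $\nu$ non-$p$-restricted, and the unique $p$-restricted composition factor of $S^d$ is forced to lie in the top $Q^d$, so $\F(\rad S^d)=0$.

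For the second isomorphism, the key additional inputs are the identification $\F(\Lambda^d)\cong\mathrm{sgn}$ (from the sign-action on the antisymmetrisation map) and the defining property $\mathrm{sgn}\otimes\F(L_\mu)\cong\F(L_{m(\mu)})$ of the Mullineux map. Combined with the monoidality of $\F$ (\cite[Theorem~4.4]{AR2015}), these give $\F(\Lambda^d\otimes L_\mu)\cong\F(L_{m(\mu)})$. Applying $\G_\otimes$, and using its compatibility with the internal tensor product (the corollary to Theorem~\ref{Th:GotimesF}), Theorem~\ref{Th:GotimesF} itself, and the idempotency $S^d\otimes S^d\cong S^d$, one obtains
\[
\G_\otimes\F(L_{m(\mu)})\cong\G_\otimes\F(\Lambda^d)\otimes\G_\otimes\F(L_\mu)\cong S^d\otimes\Lambda^d\otimes L_\mu.
\]
The remaining task, and the main obstacle, is to show $S^d\otimes\Lambda^d\otimes L_\mu\cong\Lambda^d\otimes L_\mu$. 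Running the first-part argument with $\Lambda^d\otimes L_\mu$ in place of $L_\mu$, commutativity of $\otimes$ combined with Touz\'e's formula gives $L_\nu\otimes(\Lambda^d\otimes L_\mu)\cong\Lambda^d\otimes(L_\nu\otimes L_\mu)=0$ for every composition factor $L_\nu$ of $\rad S^d$, hence $\rad S^d\otimes\Lambda^d\otimes L_\mu=0$ and $S^d\otimes\Lambda^d\otimes L_\mu\cong Q^d\otimes\Lambda^d\otimes L_\mu$. Pushing this identification all the way to $\Lambda^d\otimes L_\mu$ (equivalently, showing the counit $\G_\otimes\F(\Lambda^d\otimes L_\mu)\to\Lambda^d\otimes L_\mu$ is an isomorphism) rests on the interplay between the truncated symmetric power $Q^d$ and the exterior power $\Lambda^d$, and is the step I expect will require the deepest additional input.
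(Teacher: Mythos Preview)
Your argument for the first isomorphism is correct, though it follows a different route from the paper. The paper cites \cite[Corollary~6.9,~6.10]{Tou2015} to obtain the chain $Q^d\otimes L_\mu\cong Q^d\otimes\Lambda^d\otimes\Lambda^d\otimes L_\mu\cong\Lambda^d\otimes\Lambda^d\otimes L_\mu\cong S^d\otimes L_\mu$ directly, whereas you deduce $S^d\otimes L_\mu\cong Q^d\otimes L_\mu$ by showing $\rad S^d\otimes L_\mu=0$ via a composition-factor analysis based on Touz\'e's tensor-product formula for simples. Your version trades one black-box citation for another but makes the vanishing mechanism more explicit.

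For the second isomorphism, however, you leave a genuine gap. You correctly reach $\G_{\otimes}\F(L_{m(\mu)})\cong Q^d\otimes\Lambda^d\otimes L_\mu$ and then stop, flagging the remaining identification with $\Lambda^d\otimes L_\mu$ as needing ``the deepest additional input''. The paper avoids this detour entirely: it invokes \cite[Corollary~6.9]{Tou2015}, which gives $\Lambda^d\otimes L_\mu\cong Q^d\otimes L_{m(\mu)}$, and then applies the already-established first isomorphism to the $p$-restricted partition $m(\mu)$. Within your own framework the missing step amounts to $Q^d\otimes\Lambda^d\cong\Lambda^d$; by your first part with $\mu=(1,\dots,1)$ this is $\G_{\otimes}\F(\Lambda^d)=\G_{\otimes}(\sgn)$, and for $p\neq 2$ the explicit description $\G_{\otimes}(N)=(-)^{\otimes d}\otimes_{k\frS_d}N$ gives $\G_{\otimes}(\sgn)(V)=V^{\otimes d}\otimes_{k\frS_d}\sgn\cong\Lambda^d V$. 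So the gap is real but easily closed---either by citing Touz\'e's Corollary~6.9 as the paper does, or by this direct computation of $\G_{\otimes}(\sgn)$.
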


\begin{proof}
  From \cite[Corollary 6.9, 6.10]{Tou2015} we know that
\[Q^d\otimes L_\mu\cong Q^d\otimes\Lambda^d\otimes\Lambda^d\otimes L_\mu\cong\Lambda^d\otimes\Lambda^d\otimes L_\mu\cong S^d \otimes L_\mu.\]
By Theorem \ref{Th:GotimesF} this is the same as applying the Schur functor and its left adjoint to $L_{\mu}$.

Again by \cite[Corollary 6.9]{Tou2015} we have $\Lambda^d\otimes L_\mu\cong Q^d\otimes L_{m(\mu)}$ and thus 
$\Lambda^d\otimes L_\mu\cong\G_{\otimes}\F(L_{m(\mu)})$
\end{proof}

The following lemma shows in which cases the right adjoint of the Schur functor sends simple modules to simple functors.
I am very grateful to Karin Erdmann who pointed out the connection between
the occurence of composition factors in quotients of projective covers and $\Ext$-vanishing of simple functors.

\begin{lemma}\label{Lem:projcovervsExt} Let $\mu\in\Lambda^+_p(n,d)$. Then
  $\G_{\otimes}\F(L_\mu)\cong L_\mu$ if and only if all $\nu$ with $\Ext^1(L_\mu,L_\nu)\neq 0$ are $p$-restricted.
\end{lemma}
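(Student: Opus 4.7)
The plan is to prove the lemma by analyzing the counit $\varepsilon_\otimes\colon\G_\otimes\F(L_\mu)\to L_\mu$ and showing that $\G_\otimes\F(L_\mu)\cong L_\mu$ is equivalent to this counit being an isomorphism. The key structural input is the classical fact that, for $k$ a field of characteristic $p$, the Schur functor $\F$ satisfies $\F(L_\nu)\cong D^\nu$ when $\nu\in\Lambda^+_p(n,d)$ and $\F(L_\nu)=0$ otherwise, together with the exactness of $\F$ (since $\Gamma^\omega$ is projective).

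First I would analyse the simple quotients of $M\coloneqq\G_\otimes\F(L_\mu)=\G_\otimes(D^\mu)$ by the tensor-hom adjunction: for any simple $L_\nu$,
\[\Hom_{\Gamma^d_k}(M,L_\nu)\cong\Hom_{k\frS_d}(D^\mu,\F(L_\nu)),\]
which is $k$ if $\nu=\mu$ and $0$ otherwise. Hence $L_\mu$ is the unique simple quotient of $M$, so the counit $\varepsilon\colon M\twoheadrightarrow L_\mu$ is surjective and $\G_\otimes\F(L_\mu)\cong L_\mu$ if and only if $K\coloneqq\ker\varepsilon=0$. Applying the exact functor $\F$ to $0\to K\to M\to L_\mu\to 0$ together with $\F\G_\otimes\cong\id$ (since $\G_\otimes$ is fully faithful) yields $\F(K)=0$, so every composition factor of $K$ is of the form $L_\nu$ with $\nu$ \emph{not} $p$-restricted.

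For the forward direction, suppose $K\neq 0$. Choose a maximal proper submodule $K''\subset K$, so that $K/K''\cong L_\nu$ for some non-$p$-restricted $\nu$. Then $M/K''$ fits into a short exact sequence $0\to L_\nu\to M/K''\to L_\mu\to 0$; this sequence cannot split, for otherwise the resulting quotient $M\twoheadrightarrow L_\nu$ would contradict the uniqueness of $L_\mu$ as the simple quotient of $M$ (note $\nu\neq\mu$ since $\mu$ is $p$-restricted but $\nu$ is not). Thus $\Ext^1(L_\mu,L_\nu)\neq 0$ with $\nu$ not $p$-restricted.

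For the backward direction, suppose there exists $\nu$ not $p$-restricted with a non-split extension $0\to L_\nu\to E\to L_\mu\to 0$. Exactness of $\F$ and $\F(L_\nu)=0$ give $\F(E)\cong\F(L_\mu)=D^\mu$, and the map $E\to L_\mu$ induces the identity on $D^\mu$. If $\G_\otimes\F(L_\mu)\cong L_\mu$ (so that $\varepsilon_{L_\mu}$ is an isomorphism), naturality of $\varepsilon$ applied to $E\to L_\mu$ yields the commutative square
\[\varepsilon_{L_\mu}\circ\G_\otimes\F(E\to L_\mu)=(E\to L_\mu)\circ\varepsilon_E,\]
so that $(E\to L_\mu)\circ\varepsilon_E\colon L_\mu\to E\to L_\mu$ is the identity, splitting the extension and contradicting non-splitness. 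The main obstacle I expect is purely bookkeeping, namely ensuring that the naturality diagram collapses as claimed and that the use of exactness of $\F$ (as a $\Hom$-out of a projective object) is available in the commutative-ring setting — both of which follow from the identifications already recorded in the paper.
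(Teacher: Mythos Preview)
Your argument is correct and takes a genuinely different route from the paper. The paper invokes \cite[3.2 Corollary]{DEN2004}, which identifies $\G_\otimes\F(L_\mu)$ as the largest quotient of the projective cover $P_\mu$ whose radical has only non-$p$-restricted composition factors; simplicity then reduces to a condition on the top of $\rad P_\mu$, and the standard isomorphism $\Hom(\rad P_\mu,L_\nu)\cong\Ext^1(L_\mu,L_\nu)$ (from the long exact sequence for $0\to\rad P_\mu\to P_\mu\to L_\mu\to 0$) finishes the job. You instead work directly with the adjunction: the $\Hom$-computation pins down $L_\mu$ as the unique simple quotient of $\G_\otimes\F(L_\mu)$, $\F(K)=0$ forces every composition factor of the kernel to be non-$p$-restricted, and naturality of the counit manufactures or obstructs the relevant extensions. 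Your approach is self-contained within the paper (it avoids the external citation) and is more categorical; the paper's approach is quicker once the DEN result is granted and yields the extra structural information that $\G_\otimes\F(L_\mu)$ is a specific quotient of $P_\mu$.

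Two small points worth tightening. First, the surjectivity of the counit $\varepsilon_{L_\mu}$ does not follow merely from $L_\mu$ being the unique simple quotient of $M$; rather, $\varepsilon_{L_\mu}$ is the adjoint of $\id_{D^\mu}$ and hence nonzero, so it is surjective because its target is simple. Second, the existence of a maximal proper submodule $K''\subset K$ uses that $\G_\otimes\F(L_\mu)$ has finite length, which holds since $\G_\otimes$ sends finite-dimensional modules into $\rep\Gamma^d_k$ (e.g.\ via Theorem~\ref{Th:GotimesF}). Also, your ``forward'' and ``backward'' labels are swapped relative to the direction of the biconditional, though both implications are indeed established.
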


\begin{proof}
We use \cite[3.2 Corollary]{DEN2004} that states that $\G_{\otimes}\F(L_\mu)$ is the largest quotient of the projective cover $P_\mu$
of $L_{m(\mu)}$ whose radical has only non $p$-restricted composition factors. This is simple if and only if 
the top of $\rad P_\mu$ has only $p$-restricted composition factors.

If we apply $\Hom(-,L_\nu)$ to the exact sequence
\[0\to\rad P_{\mu}\to P_\mu\to L_\mu\to 0,\]
we get
\begin{align*}0\to\Hom(L_\mu,L_\nu)\to \Hom(P_\mu,L_\nu)&\to \Hom(\rad P_{\mu},L_\nu)\\
&\to\Ext^1(L_\mu,L_\nu)\to \Ext^1(P_\mu,L_\nu)=0.
\end{align*}
Since $\Hom(L_\mu,L_\nu)\cong \Hom(P_\mu,L_\nu)$, we obtain $\Hom(\rad P_{\mu},L_\nu)\cong\Ext^1(L_\mu,L_\nu)$.
That means, $L_\nu$ is a composition factor of the top of $\rad P_\mu$ if and only if \linebreak $\Ext^1(L_\mu,L_\nu)\neq 0$. 
So, we get
\begin{align*}
\G_{\otimes}\F(L_{\mu}) \text{ is simple }\Leftrightarrow \text{ all }
\nu \text{ with } \Ext^1(L_\mu,L_\nu)\neq 0 \text{ are $p$-restricted.}
\end{align*}
Since $\F\G_{\otimes}\F(L_\mu)\cong\F(L_\mu)$ we know that if $\G_{\otimes}\F(L_{\mu}) $ is simple, it must be isomorphic to $L_\mu$.
\end{proof}

Finally, we get the following characterization of tensor products of simple functors corresponding to
$p$-restricted partitions that are again simple.
\begin{theorem}\label{Th:simples}
 Let $k$ be a field of odd characteristic and $\lambda,\mu\in\Lambda^+_p(n,d)$. The tensor
 product $L_\lambda\otimes L_\mu$ is simple if and only if, up to interchanging $\lambda$ and $\mu$,
 \begin{compactitem}
  \item[-] $L_\lambda\cong\Lambda^d$ and all $\nu$ with $\Ext^1(L_{m(\mu)},L_\nu)\neq 0$ are $p$-restricted, or
  \item[-]  $L_\lambda\cong Q^d$ and  all $\nu$ with $\Ext^1(L_\mu,L_\nu)\neq 0$ are $p$-restricted.
 \end{compactitem}
 In these cases $\Lambda^d\otimes L_\mu\cong L_{m(\mu)}$ and $Q^d\otimes L_\mu\cong L_{\mu}$.
\end{theorem}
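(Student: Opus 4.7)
The plan is to split the biconditional into two halves, using Touz\'e's work in \cite{Tou2015} to handle the generic non-simplicity and using the two lemmas of this section to handle the remaining "exceptional" cases involving $\Lambda^d$ and $Q^d$. Concretely, Touz\'e's analysis of tensor products of simples (loc.\ cit., Sections 6--7) implies that for $p$-restricted partitions $\lambda,\mu \in \Lambda^+_p(n,d)$ the product $L_\lambda \otimes L_\mu$ fails to be simple unless, up to swapping the two factors, $L_\lambda$ is isomorphic either to $\Lambda^d$ or to $Q^d$. This reduction is where the hypothesis that $\cha k$ is odd enters, via the identifications $\Lambda^d \otimes \Lambda^d \cong S^d$ and $\Lambda^d \otimes Q^d \cong Q^d \otimes \Lambda^d$ recalled in Lemma~\ref{Lem:con_simple_adj}. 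Thus the "only if" direction reduces to showing, in these two remaining cases, that simplicity of $L_\lambda \otimes L_\mu$ forces the claimed $\Ext$-vanishing conditions.

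For the "if" direction (and to pin down the isomorphism type in the simple case), I would apply the two lemmas in sequence. Suppose first $L_\lambda \cong Q^d$. Lemma~\ref{Lem:con_simple_adj} gives
\[
Q^d \otimes L_\mu \cong \G_\otimes\F(L_\mu),
\]
so $L_\lambda \otimes L_\mu$ is simple iff $\G_\otimes\F(L_\mu)$ is. By Lemma~\ref{Lem:projcovervsExt}, this happens precisely when every $\nu$ with $\Ext^1(L_\mu,L_\nu)\neq 0$ is $p$-restricted, in which case $\G_\otimes\F(L_\mu)\cong L_\mu$. Next suppose $L_\lambda \cong \Lambda^d$. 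Then Lemma~\ref{Lem:con_simple_adj} gives
\[
\Lambda^d \otimes L_\mu \cong \G_\otimes\F(L_{m(\mu)}),
\]
and applying Lemma~\ref{Lem:projcovervsExt} to $m(\mu)$ (which is itself $p$-restricted, since the Mullineux map preserves $\Lambda^+_p(n,d)$) gives that $L_\lambda\otimes L_\mu$ is simple iff every $\nu$ with $\Ext^1(L_{m(\mu)},L_\nu)\neq 0$ is $p$-restricted, in which case the product equals $L_{m(\mu)}$.

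For the converse, the same chain of isomorphisms runs backwards: if we are in one of the two remaining cases allowed by Touz\'e's reduction and $L_\lambda \otimes L_\mu$ is simple, then $\G_\otimes\F(L_\mu)$ or $\G_\otimes\F(L_{m(\mu)})$ is simple, and Lemma~\ref{Lem:projcovervsExt} yields the $\Ext$-vanishing assertion. The main obstacle is the first step: making the reduction to the two exceptional cases fully rigorous by extracting a clean, quotable statement from Touz\'e's series of corollaries in \cite[Section 6]{Tou2015}, since the paper phrases the result in terms of which pairs produce non-simple tensor products rather than as a clean classification. Once that bookkeeping is done, the proof is just an assembly of Lemmas~\ref{Lem:con_simple_adj} and~\ref{Lem:projcovervsExt}.
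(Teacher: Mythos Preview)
Your proposal is correct and follows exactly the paper's route. For the reduction you flag as the main obstacle, the paper simply cites \cite[Corollary 6.6]{Tou2015}, which rests on the Bessenrodt--Kleshchev theorem \cite{BK2000} that the Kronecker product of two simple $k\frS_d$-modules each of dimension $\geq 2$ is never simple; it is this input---not the monoidal identities in Lemma~\ref{Lem:con_simple_adj}---that uses the odd-characteristic hypothesis.
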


\begin{proof}
First note that if $\dim\F(L_\lambda)\geq 2$ and $\dim\F(L_\mu)\geq 2$, then $L_\lambda\otimes L_\mu$ is not
simple (see \cite[Corollary 6.6]{Tou2015}).
This follows from the fact that for simple $k\frS_d$-modules
of dimension $\geq 2$ the Kronecker product is never simple (\cite[Main Theorem]{BK2000}). There are only two
$k\frS_d$-modules with dimension $1$, namely, by setting $\omega=(1,\dots,1)$, $L(\omega)=\sgn$, and $M^{(d)}=k$.
Now, $\F(L_\omega)=\sgn$ and $\F(Q^d)=k$.
Thus, the only cases where the tensor product might be simple are $\Lambda^d\otimes L_\mu$ and  $Q^d\otimes L_\mu$.

Consider first the case  $Q^d\otimes L_\mu$. By Lemma \ref{Lem:con_simple_adj}, this is the same as $\G_{\otimes}\F(L_{\mu})$
and by Lemma \ref{Lem:projcovervsExt} it is simple if and only if 
the top of $\rad P_\mu$ has only $p$-restricted composition factors.

For $\Lambda^d\otimes L_\mu$ use Lemma \ref{Lem:con_simple_adj} and  Lemma \ref{Lem:projcovervsExt} to obtain that 
$\Lambda^d\otimes L_\mu\cong\G_{\otimes}\F(L_{m(\mu)})$ is simple if and only if  all
$\nu$ with  $\Ext^1(L_{m(\mu)},L_\nu)\neq 0$ are $p$-restricted.
\end{proof}
It is not known in general when  $\Ext^1(L_\mu,L_\nu)\neq 0$ for partitions $\mu,\nu\in\Lambda(n,d)$,
so the question of when the internal tensor product of two simple functors is again simple
is not yet answered completely. Also the computation of the Mullineux map $m$ is not easy in general. 

\begin{corollary}\label{Cor:pcores}
 If $\mu$ is a $p$-core, then  $Q^d\otimes L_\mu\cong L_{\mu}$ and $\Lambda^d\otimes L_{m(\mu)}\cong L_{\mu}$.
\end{corollary}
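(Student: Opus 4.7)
The plan is to reduce both statements to a single instance of Theorem~\ref{Th:simples}, and then to observe that the $\Ext$-vanishing hypothesis in that theorem is trivially satisfied when $\mu$ is a $p$-core thanks to the block decomposition of $\Rep\Gamma^d_k$.

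First I would check that both $\mu$ and $m(\mu)$ are $p$-restricted, so that Theorem~\ref{Th:simples} applies. A $p$-core is automatically $p$-restricted: if $\mu_i-\mu_{i+1}\geq p$, then the last $p$ boxes of row $i$ form a removable rim $p$-hook, contradicting the $p$-core hypothesis. Since the Mullineux map is an involution on $\Lambda^+_p(n,d)$, the partition $m(\mu)$ is also $p$-restricted. Note moreover that the condition in Theorem~\ref{Th:simples} for $\Lambda^d\otimes L_{m(\mu)}$ to be simple is that all $\nu$ with $\Ext^1(L_{m(m(\mu))},L_\nu)\neq 0$ are $p$-restricted, which using $m^2=\id$ is exactly the same condition as for $Q^d\otimes L_\mu$ to be simple. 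Hence both parts of the corollary reduce to the following claim.

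\emph{Claim.} If $\mu$ is a $p$-core of $d$, then every $\nu\in\Lambda^+(n,d)$ with $\Ext^1(L_\mu,L_\nu)\neq 0$ is $p$-restricted.

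The main step is to invoke the standard Nakayama-type block decomposition of $\Rep\Gamma^d_k$ (equivalently, of the Schur algebra): two simples $L_\mu,L_\nu$ lie in the same block only if $\mu$ and $\nu$ have the same $p$-core, and in particular $\Ext^1(L_\mu,L_\nu)=0$ when their $p$-cores differ. If $\mu$ itself is a $p$-core of size $d$, then any $\nu$ of size $d$ with the same $p$-core must satisfy $(|\nu|-|\mu|)/p = 0$, so the block has weight $0$ and $\nu=\mu$. Thus the only $\nu$ for which $\Ext^1(L_\mu,L_\nu)$ can be nonzero is $\nu=\mu$ itself, which is $p$-restricted. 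The $\Ext$-hypothesis of Theorem~\ref{Th:simples} therefore holds trivially, and we conclude $Q^d\otimes L_\mu\cong L_\mu$ and $\Lambda^d\otimes L_{m(\mu)}\cong L_{m(m(\mu))}=L_\mu$.

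The only real input beyond Theorem~\ref{Th:simples} is the labelling of blocks of the Schur algebra by $p$-cores; this is classical, so the main obstacle is simply to cite it cleanly. Everything else is a bookkeeping exercise with the Mullineux involution and with the definition of a $p$-core.
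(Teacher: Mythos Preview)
Your proof is correct and essentially the same as the paper's: both observe that a $p$-core labels a weight-zero block, so $L_\mu$ is alone in its block (equivalently $P_\mu=L_\mu$, or $\Ext^1(L_\mu,L_\nu)=0$ for all $\nu$), and then conclude. The only cosmetic difference is that the paper invokes \cite[3.2~Corollary]{DEN2004} together with Lemma~\ref{Lem:con_simple_adj} directly, whereas you route the same block-theoretic input through Theorem~\ref{Th:simples}.
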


\begin{proof}
If $\mu$ is a $p$-core, then it is the only simple in its block, i.e.\
$P_\mu=L_\mu$ and thus by \cite[3.2 Corollary]{DEN2004} $\Lambda^d\otimes L_{m(\mu)}\cong Q^d\otimes L_\mu\cong L_{\mu}$.
\end{proof}

\subsection{Special case}
In the case $n=d$ we make use of the following result to obtain some partitions $\mu$ such that
the tensor product $\Lambda^d\otimes L_\mu$ resp.\ $Q^d\otimes L_\mu$ is simple:
\begin{proposition}\cite[5.6 Proposition]{DEN2004}\label{Prop:DEN5.6}
Let $n=d$ and $p > 2$. Assume $\mu$ is $p$-restricted such that all $\lambda$ with $m(\mu') \geq \lambda$ are $p$-restricted. Then
\[\G_{\otimes}\F(L_{\mu})\cong L_\mu\]
\end{proposition}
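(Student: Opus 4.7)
My plan is to verify the criterion of Lemma~\ref{Lem:projcovervsExt}: it suffices to prove that every $\nu$ with $\Ext^1_{\Gamma^d_k}(L_\mu,L_\nu)\neq 0$ is $p$-restricted. So fix such a $\nu$; the task is to derive $\nu\in\Lambda^+_p(n,d)$ from the hypothesis that the dominance cone below $m(\mu')$ contains only $p$-restricted partitions. I will do this by showing $\nu\leq m(\mu')$ in the dominance order, after which the hypothesis finishes the job.

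First I would work inside $\Rep\Gamma^d_k$, which via the equivalence~(\ref{eqschuralgebra}) identifies with the quasi-hereditary category $S_k(d,d)\Mod$. Here the projective cover $P(L_\mu)$ admits a Weyl filtration, whose factors $\Delta(\tau)$ satisfy $\tau\geq\mu$ by BGG reciprocity $(P(L_\mu):\Delta(\tau))=[\nabla(\tau):L_\mu]$. A composition factor $L_\nu$ appearing in $\rad P(L_\mu)/\rad^2 P(L_\mu)$ — which by the argument in Lemma~\ref{Lem:projcovervsExt} is exactly what controls $\Ext^1(L_\mu,L_\nu)$ — must occur in one of these $\Delta(\tau)$'s, giving the rough bound $\nu\leq\tau$ for some $\tau\geq\mu$.

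Second, I would sharpen this to $\nu\leq m(\mu')$ by tracking highest weights through the Mullineux correspondence and conjugation. The relevant identities come from the interaction of the Schur functor with the Kuhn dual $(-)^\circ$ and with $\Lambda^d$-tensoring: by \cite[Corollary~6.9]{Tou2015}, tensoring with $\Lambda^d$ implements $\mu\mapsto m(\mu)$ on $p$-restricted simples, while $(-)^\circ$ combined with partition conjugation exchanges $L_\mu$ and $L_{m(\mu')}$ up to a twist. Chasing the Weyl filtration of $P(L_\mu)$ through these two involutions forces every $\tau$ above to be dominance-bounded by $m(\mu')$, and hence $\nu\leq\tau\leq m(\mu')$.

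The main obstacle is the dominance bookkeeping: one has to coordinate the three combinatorial operations — conjugation $(-)'$, the Mullineux map $m$, and the Kuhn dual $(-)^\circ$ — with BGG reciprocity in $S_k(d,d)\Mod$, and verify that the top weight of $P(L_\mu)$ really corresponds to $m(\mu')$ on the other side of these involutions. The hypothesis $p>2$ is presumably used here to guarantee that $\Lambda^d$-tensoring is an honest Mullineux involution on the relevant $p$-restricted part. Once $\nu\leq m(\mu')$ is secured, the standing hypothesis yields that $\nu$ is $p$-restricted, and Lemma~\ref{Lem:projcovervsExt} delivers $\G_\otimes\F(L_\mu)\cong L_\mu$.
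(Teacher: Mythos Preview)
The paper does not prove this proposition at all: it is quoted verbatim from \cite[5.6~Proposition]{DEN2004} and used as a black box. So there is no argument in the paper to compare your sketch against.

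On its own merits, your reduction to Lemma~\ref{Lem:projcovervsExt} is fine, but the heart of the matter --- your ``second step'' --- is not a proof. You assert that every Weyl factor $\Delta(\tau)$ of $P(L_\mu)$ satisfies $\tau\le m(\mu')$, and you justify this by ``chasing the Weyl filtration through two involutions''. Two problems. First, the tools you name do not do what you say: simples are self-dual under $(-)^\circ$ (as recorded just before Lemma~\ref{Lem:con_simple_adj}), so the Kuhn dual does not ``exchange $L_\mu$ and $L_{m(\mu')}$''; and tensoring simples by $\Lambda^d$ acts by Mullineux on $p$-restricted labels but tells you nothing about how it moves \emph{Weyl factors of a projective cover} around in the dominance order. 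Second, the statement you are trying to extract --- that the Weyl filtration of $P(L_\mu)$ is bounded above by $m(\mu')$ --- is essentially the identification of $P(L_\mu)$ with the indecomposable tilting module $T(m(\mu'))$ when $n=d$. That is a genuine theorem (Donkin's description of tilting modules for Schur algebras via Young modules and the Mullineux bijection), not a consequence of elementary bookkeeping, and it is precisely the kind of input that \cite{DEN2004} develops and uses to prove their Proposition~5.6. Without citing or proving that identification, your ``second step'' is a restatement of the goal rather than an argument, and the proposal as written has a gap at exactly the point where all the content lies.
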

In particular, every partition $\lambda$ such that all smaller partitions are $p$-restricted, provides a partition $\mu=(m(\lambda))'$
such that $\G_{\otimes}\circ\F(L_{\mu})= L_\mu$. Starting with the partition $(1,\dots,1)$ and
going through the elements of $\Lambda^+_p(n,d)$ in the lexicographic order, the first partitions are always all $p$-restricted.
The smallest not $p$-restricted partition $\nu$ is $\nu:=(p+1,1\dots,1)$ if $d\geq p+1$ and $\nu:=(d)$ otherwise. Thus,
for every $\mu=(m(\lambda))'$ with $\lambda< \nu$ we get $Q^d\otimes L_\mu\cong \G_{\otimes}\F(L_{\mu})\cong L_\mu$.

Unfortunately, the Proposition only provides a sufficient condition,
so one does not know what happens for partitions $\lambda$ such that $\lambda>\nu$.

\subsection*{The case $n=p=d$.} We can provide a full answer if we suppose in addition that $n=p$.
I am very thankful to Karin Erdmann for her advice regarding the Mullineux map in this case as well
as pointing out several composition series used in the proof of the following theorem.

\begin{theorem}\label{n=d=p}
  Let $k$ be a field of characteristic $p$ and $n=p=d>2$.
  The tensor
 product $L_\lambda\otimes L_\mu$ is simple if and only if, up to interchanging $\lambda$ and $\mu$,
 \begin{compactitem}
  \item[-] $\lambda=(1,\dots,1)$ and $\mu\neq(3,1^{p-3})$, or
  \item[-] $\lambda=(p-1,1)$ and  $\mu\neq (p-1,1)$.
 \end{compactitem}
 In these cases $L_{(1,\dots,1)}\otimes L_\mu\cong L_{m(\mu)}$ and $L_{(p-1,1)}\otimes L_\mu\cong L_{\mu}$.
\end{theorem}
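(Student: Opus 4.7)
The strategy is to reduce the statement to an $\Ext^1$-computation via Theorem~\ref{Th:simples} and then exploit the extreme sparsity of non-$p$-restricted partitions when $n=p=d$. By Theorem~\ref{Th:simples}, the product $L_\lambda\otimes L_\mu$ can be simple only if, after possibly swapping $\lambda$ and $\mu$, either $L_\lambda\cong\Lambda^p$ or $L_\lambda\cong Q^p$. In the case $n=p=d$ one identifies $\Lambda^p\cong L_{(1^p)}$, and $Q^p\cong L_{(p-1,1)}$ via the short exact sequence
\[0\to L_{(p)}\to S^p\to Q^p\to 0,\]
in which $L_{(p)}\cong I^{(1)}$ embeds into $S^p$ as the image of the $p$-th power map and the quotient is the truncated symmetric power.

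The key observation is that in $\Lambda^+(p,p)$ the only partition which fails to be $p$-restricted is $(p)$ itself; hence the $\Ext$-vanishing criterion of Theorem~\ref{Th:simples} collapses to the single requirement $\Ext^1(L_\mu,L_{(p)})=0$. Since $S^p$ is injective with simple socle $L_{(p)}$, applying $\Hom(L_\mu,-)$ to the displayed sequence gives, for $p$-restricted $\mu$ (so $\mu\neq(p)$), that both $\Hom(L_\mu,L_{(p)})$ and $\Hom(L_\mu,S^p)$ vanish, whence
\[\Ext^1(L_\mu,L_{(p)})\cong\Hom(L_\mu,L_{(p-1,1)}),\]
which is $k$ precisely when $\mu=(p-1,1)$ and is $0$ otherwise.

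Combining this with Lemmas~\ref{Lem:con_simple_adj} and~\ref{Lem:projcovervsExt} yields $Q^p\otimes L_\mu\cong\G_{\otimes}\F(L_\mu)\cong L_\mu$ iff $\mu\neq(p-1,1)$, and $\Lambda^p\otimes L_\mu\cong\G_{\otimes}\F(L_{m(\mu)})\cong L_{m(\mu)}$ iff $m(\mu)\neq(p-1,1)$, i.e.\ iff $\mu\neq m((p-1,1))$. The remaining task is the combinatorial identification $m((p-1,1))=(3,1^{p-3})$, which is obtained by running the Mullineux algorithm on the hook $(p-1,1)$ or, equivalently, by computing $D^{(p-1,1)}\otimes\sgn$ for $\frS_p$ in characteristic $p$ using the explicit description of its simple modules.

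The main obstacle is the Mullineux identification in the last step: the Ext-vanishing itself is almost free, since only a single non-$p$-restricted weight lies in $\Lambda^+(p,p)$ and the injective $S^p$ has radical length two with known composition factors, whereas pinning down $m((p-1,1))$ requires the combinatorial input on the Mullineux involution for $\frS_p$ in characteristic $p$, which is precisely the material for which the author acknowledges K.~Erdmann.
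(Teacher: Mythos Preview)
Your argument is correct and in fact cleaner than the paper's. Both proofs begin with the reduction via Theorem~\ref{Th:simples} to $L_\lambda\in\{\Lambda^p,Q^p\}$, identify $Q^p\cong L_{(p-1,1)}$, observe that $(p)$ is the unique non-$p$-restricted partition in $\Lambda^+(p,p)$, and use the Mullineux value $m((p-1,1))=(3,1^{p-3})$. The difference is in how the $\Ext$-criterion is verified: the paper splits the $p$-restricted $\mu$ into $p$-cores (handled by Corollary~\ref{Cor:pcores}) and hooks $(p-k,1^k)$ (handled by the external Proposition~\ref{Prop:DEN5.6} from \cite{DEN2004}, with the exceptional case $\mu=(p-1,1)$ treated via the radical of $P_{(p-1,1)}$), whereas you treat all $p$-restricted $\mu$ uniformly by reading off $\Ext^1(L_\mu,L_{(p)})\cong\Hom(L_\mu,L_{(p-1,1)})$ from the injectivity of $S^p$ and the short exact sequence $0\to L_{(p)}\to S^p\to L_{(p-1,1)}\to 0$. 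Your route avoids the case split and the appeal to \cite{DEN2004}, at the cost of needing to know that $S^p$ is injective with simple socle $L_{(p)}$ --- both of which are immediate from $S^p=(\Gamma^p)^\circ$ and the self-duality of simples. The paper's route, by contrast, makes the connection to the structure of $\rad P_\mu$ more explicit.
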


\begin{proof}
We always have $\Lambda^d=L_{(1,\dots,1)}$ and, if $n=d=p$, the truncated symmetric powers $Q^d$ is 
the simple module indexed by the partition $(p-1,1)$, i.e.\ $Q^d=L_{(p-1,1)}$. 
Thus, by Theorem \ref{Th:simples}, all tensor products where $\lambda\neq (1,\dots,1)$ and  $\lambda\neq(p-1,1)$
are not simple. It remains to check the cases where $\lambda=(1,\dots,1)$ or $\lambda=(p-1,1)$.

Now all partitions $\mu$ not of the form $(p-k,1^k)$ for $1\leq k\leq p$ are $p$-cores, so in these cases
by Corollary \ref{Cor:pcores} 
\[L_{(p-1,1)}\otimes L_\mu\cong L_{\mu} \quad\text{ and }\quad
L_{(1,\dots,1)}\otimes L_\mu\cong L_{m(\mu)}.\]

Suppose now $\mu=(p-k,1^k)$. There is only one not  $p$-restricted partition, namely the partition $(p)$. 
We have $m((2,1^{p-2}))=(p)$, thus all but the partition $\mu=(p-1,1)$ fulfill the condition of Proposition \ref{Prop:DEN5.6}
and we get
\[L_{(p-1,1)}\otimes L_\mu\cong\G_{\otimes}\F(L_{\mu})\cong L_\mu\]
for all $\mu=(p-k,1^k)$ with $1<k\leq p$.
Since $m((p-1,1))=(3,1^{p-3})$ we also get
\[L_{(1,\dots,1)}\otimes L_\mu\cong\G_{\otimes}\F(L_{m(\mu)})\cong L_{m(\mu)}\]
for all $\mu=(p-k,1^k)$ with $1\leq k<3$ or $3<k\leq p$.

It remains the last cases $\mu=(p-1,1)$ resp.\ $\mu=(3,1^{p-3})$.
We know that 
\[S^{(p)}=\begin{array}{c}
            L_{(p-1,1)} \\
             L_{(p)}\\
        \end{array}\]
so that $S^{(p)}/\rad(S^{(p)})=L_{(p-1,1)}$ and hence there exists a surjection $P_{(p-1,1)}\twoheadrightarrow S^{(p)}$.
But then $L_{(p)}$ is in the top of $\rad(P_{(p-1,1)})$ and thus $\G_{\otimes}\F(L_{(p-1,1)})\cong L_{(p-1,1)}\otimes L_{(p-1,1)}$ is not simple.
Since $m((p-1,1))=(3,1^{p-3})$, we also get $L_{(1,\dots,1)}\otimes L_{(3,1^{p-3})}\cong L_{(p-1,1)}\otimes L_{(p-1,1)}$ is not simple.
\end{proof}

\end{document}